\newtheorem{theorem}{Theorem}[section]
\newtheorem{corollary}[theorem]{Corollary}
\newtheorem{definition}[theorem]{Definition}
\newtheorem{conjecture}{Conjecture}[section]
\numberwithin{equation}{section}
\title{The additive divisor problem and minorants of divisor functions}
\author{J.C. Andrade and K. Smith }
\date{\today}                                           
\address{Department of Mathematics, University of Exeter, Exeter, EX4 4QF, UK}
\email{j.c.andrade@exeter.ac.uk}
\email{ks614@exeter.ac.uk}
\subjclass[2010]{Primary 11N37; Secondary 11M06}
\keywords{divisor function, additive divisor sum, arithmetic progression, uniform distribution, exponent of distribution}
\begin{document}
\maketitle
\begin{abstract}
In this paper we consider the general additive divisor problem. Here the divisor functions $d_k(n)$ are the number of ways of writing a natural number $n$ as a product of $k$ factors, and the problem is that of establishing asymptotic formulae for the correlations $\sum_{n\leq x}d_k(n)d_{\ell}(n+h)$ with $h,k,\ell\in\mathbb{N}$. We show that the conjectured asymptotic formulae hold when one or both of the divisor functions are replaced by the minorants $d_k(n,A)=\sum_{m|n,m\leq n^A}d_{k-1}(m)$ with $A$ sufficiently small, leading us to obtain new lower bounds for the asymptotics in the original problem. The main arguments rest on a study of the distribution of the minorants $d_k(n,A)$ in arithmetic progressions. 
\end{abstract}


\section{Introduction and statement of results}
In this paper we consider the problem of finding asymptotic formulae for additive divisor sums
\begin{eqnarray}\label{def}
D_{h,k,{\ell}}(x)=\sum_{n\leq x}d_k(n+h)d_{\ell}(n)
\end{eqnarray}
where  $h,k,{\ell}\in\mathbb{N}$ are fixed strictly positive natural numbers and $d_k(n)$ denotes the number of ways of writing $n$ as a product of $k$ factors. In other words, this is the problem of counting the number of solutions of the Diophantine equation
\begin{eqnarray}
h=n_1\cdots n_k-m_1\cdots m_{\ell} \nonumber
\end{eqnarray} 
where $(m_1,...,m_{\ell})\in\mathbb{N}^l$, $(n_1,...,n_k)\in\mathbb{N}^k$ and $n_1\cdots n_k\leq x$  as $x\rightarrow\infty$. \\

When $k={\ell}$, the correlations in (\ref{def}) arise in connection with the problem of finding asymptotic formulae for the $2k$th moments of the Riemann zeta function on the critical line.
 This connection was first exploited by Ingham \cite{Ing} in the course of proving his asymptotic formula for the fourth moment. Ingham proved that
\begin{eqnarray}
D_{h,2,2}(x)\sim\frac{6}{\pi^2}\sigma_{-1}(h)\log^2x \nonumber
\end{eqnarray}
where $\sigma_{z}(n)=\sum_{d|n}d^{z}$,
and subsequently Estermann \cite{Est} established the asymptotic expansion
\begin{eqnarray}\label{est}
D_{h,2,2}(x)=xP_{h,2,2}(\log x)+O\left(x^{11/12+\epsilon}\right)
\end{eqnarray}
where $P_{h,2,2}$ is a polynomial of degree $2$. Estermann demonstrated that $D_{h,2,2}(x)$ is related to the spectral theory of modular forms, his result making use of a non-trivial bound for Kloosterman sums. Subsequently, Heath-Brown \cite{Heath} used Weil's bound \cite{weil} for Kloosterman sums to obtain the error term $O\left(x^{5/6+\epsilon}\right)$ in (\ref{est}), which was later improved  by Motohashi \cite{mot} to $O\left(x^{2/3+\epsilon}\right)$ uniformly for $h\leq x^{20/27}$. Each of these improvements lead to corresponding improvements of the error term in the asymptotic expansion for the fourth moment. \\

When $k\in\mathbb{N}$ and $\ell=2$, the additive divisor problem is tractable. Many authors have worked on this, for instance Hooley \cite{H}, Linnik \cite{lin}, Heath-Brown \cite{H2}, Motohashi \cite{moto}, Bykovski and Vinogradov \cite{by} and Topacogullari \cite{to2,to1,to3}, among others. Thus for any fixed $k$ it is known that there is a $\delta=\delta(k)>0$ and a polynomial $P_{h,k,2}$ of degree $k$ such that
\begin{eqnarray}\label{adv}
D_{h,k,2}(x)=xP_{h,k,2}(\log x)+O_{h,k}(x^{1-\delta}).
\end{eqnarray}
Despite these advances, the problem remains intractable when both $k,{\ell}\geq 3$. The main conjecture is as follows.

\begin{conjecture}\label{manc} If $h,k,{\ell}\in\mathbb{N}$ with $k,{\ell}$ fixed and $h=O(x^{1-\epsilon})$ for some fixed $\epsilon>0$, then there is a fixed $\delta=\delta(k,{\ell},\epsilon)>0$ and a polynomial $P_{h,k,{\ell}}$ of degree $k+{\ell}-2$ such that
\begin{eqnarray}\label{mainp}
D_{h,k,{\ell}}(x)=xP_{h,k,{\ell}}(\log x)+O_{\epsilon,k,{\ell}}(x^{1-\delta}).\nonumber
\end{eqnarray}
The asymptotic is conjectured to be
\begin{eqnarray}\label{lead}
\frac{D_{h,k,{\ell}}(x)}{x\log^{k+{\ell}-2}x}\sim\frac{C_{k,{\ell}}f_{k,{\ell}}(h)}{(k-1)!({\ell}-1)!}
\end{eqnarray}
as $x\rightarrow\infty$, where\footnote{The general form of the coefficients $C_{k,{\ell}}$ and $f_{k,{\ell}}(h)$ appearing in (\ref{lead}) were calculated by Ng and Thom \cite{NgT} based on the techniques introduced by Conrey and Gonek \cite{CG}. The same prediction was made by Tao \cite{Tao} based on pseudorandomness heuristics.}
\begin{eqnarray}\label{c}
C_{k,{\ell}}=\prod_p \left(1-p^{-1}\right)^{{\ell}-1}+\left(1-p^{-1}\right)^{k-1}-\left(1-p^{-1}\right)^{k+{\ell}-2}
\end{eqnarray}
and
\begin{eqnarray}\label{fform}
f_{k,{\ell}}(h)=\prod_{p|h} \frac{(1-p^{-1})\sum_{\alpha=0}^{\gamma}d_{{\ell}-1}(p^{\alpha})  
\sum_{\beta=\alpha}^{\infty}d_k(p^{\beta})p^{-\beta}+d_k(p^{\gamma})\sum_{\alpha=\gamma+1}^{\infty}d_{{\ell}-1}(p^{\alpha}) p^{-\alpha}}{(1-p^{-1})^{1-k}+(1-p^{-1})^{1-{\ell}}    -1} \nonumber\\
\end{eqnarray}
where  $h=\prod p^{\gamma}$.
\end{conjecture}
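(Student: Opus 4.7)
We outline the natural programme suggested by the partial divisor function formalism of (\ref{ddefa}), making explicit where the obstruction lies; the conjecture for $k,l\geq 3$ remains open and is the central problem this paper addresses rather than resolves. The starting point is the Dirichlet convolution identity $d_l(n)=\sum_{q\mid n} d_{l-1}(q)$, which converts the correlation into a weighted sum of $d_k$ over arithmetic progressions,
\[
D_{h,k,l}(x)=\sum_{q\leq x} d_{l-1}(q)\sum_{\substack{m\leq x+h \\ m\equiv h\,(\mathrm{mod}\,q)}} d_k(m).
\]
The plan is to split the $q$-range at $q=x^A$ with $A\in(0,1]$. The contribution from $q\leq x^A$ is precisely $\sum_{n\leq x} d_k(n+h)\,d_l(n,A)$, and by the results of Section \ref{secres1} this part admits an asymptotic with power-saving error provided $d_k$ is uniformly distributed in arithmetic progressions of modulus up to $x^A$.

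Next I would compute the leading constant. Inserting the expected main term $\tfrac{g_k(q,h)}{q}\sum_{m\leq x+h} d_k(m)$ for each residue class (where $g_k(q,h)$ is the multiplicative singular density that appears in the Selberg--Delange expansion of $d_k$ on residue classes) yields the formal identity
\[
D_{h,k,l}(x)\;\approx\;\frac{x\log^{k-1}x}{(k-1)!}\sum_{q\leq x^A}\frac{d_{l-1}(q)\,g_k(q,h)}{q}.
\]
The Dirichlet series $\sum_q d_{l-1}(q)\,g_k(q,h)/q^s$ factors into an Euler product; at primes $p\nmid h$ the local factor is expected to match $(1-p^{-1})^{1-l}+(1-p^{-1})^{1-k}-1$, reproducing (\ref{c}), while at primes $p^{\gamma}\|h$ it should match the numerator of the $p$-local term of $f_{k,l}(h)$ in (\ref{fform}). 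This identification reduces to a direct computation with $d_{l-1}(p^\alpha)$ and $g_k(p^\alpha,p^\gamma)$ at each prime power. The degree $k+l-2$ of $P_{h,k,l}$ then arises by pairing the $\log^{k-1}x$ growth of the partial sums of $d_k$ with the $\log^{l-1}x$ growth produced by the $d_{l-1}$-sum as $A\to 1$.

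The hard part, and the reason the conjecture remains open for $k,l\geq 3$, is the complementary range $q>x^A$. Here the inner progression has on average fewer than $x^{1-A}$ terms, so controlling it requires either an unconditional exponent of distribution for $d_k$ that can be pushed arbitrarily close to $1$, or genuine cancellation among the error terms obtained via dispersion, the Kuznetsov formula, or the $\delta$-method. For $k=2$ these tools are available through Weil/Deligne bounds for Kloosterman sums and yield (\ref{adv}); for $k=3$ only partial results are known; and for $k\geq 4$ essentially nothing of the required strength is available. For this reason I do not expect to close the argument by the route above, and instead the paper recasts the problem in terms of $d_l(n,A)$ and quantifies what any improvement in the exponent of distribution for $d_k$ would imply for $D_{h,k,l}(x)$, providing lower bounds, an equivalent condition, and a Tauberian theorem for the conjectured asymptotic.
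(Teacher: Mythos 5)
The statement is Conjecture \ref{manc}, which the paper itself does not prove: it is presented as the central open problem, with the constants \eqref{c} and \eqref{fform} obtained heuristically by Conrey--Gonek, Ng--Thom and Tao, and you are right to decline to claim a proof. Your outline --- opening $d_l$ as a convolution, splitting at $q=x^A$, inserting the expected main term for $d_k$ in progressions to recover the Euler product, and identifying the range $q>x^A$ (equivalently, the lack of an exponent of distribution near $1$ for $k\geq 3$) as the genuine obstruction --- is exactly the viewpoint the paper adopts in Theorems \ref{anotherpoly} and \ref{mainlem} and in Corollaries I and II, so your treatment is consistent with the paper's.
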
 
\noindent Conjecture \ref{manc} was formulated by 
Conrey and Gonek \cite{CG} and Iv\' ic \cite{I1,i2} using the `$\delta$-method' of Duke, Friedlander and Iwaniec \cite{DFI} and recently refined by  Ng and Thom \cite{NgT} and  Tao \cite{Tao}. The full polynomial $P_{h,k,{\ell}}$ (i.e. including the lower order terms) is also described in \cite{NgT} and \cite{I1}. The relationship between Conjecture \ref{manc} in the cases $k=\ell$ and the problem of finding asymptotic formulae for the sixth and eighth moments of the Riemann zeta function was first described by Conrey and Ghosh \cite{CGh} and Conrey and Gonek \cite{CG}. These ideas were refined by Ng \cite{Ng2}, showing that a certain smoothed version of the $k=\ell=3$ case of Conjecture \ref{manc} implies an asymptotic expansion for the sixth moment, and by Ng \emph{et al} \cite{Ngetal}, showing that a similar conjecture for the $k=\ell=4$ case of Conjecture \ref{manc} and the Riemann hypothesis together imply an asymptotic formula for the eighth moment. Moreover, in the series of papers \cite{CK1,CK2,CK3,CK4,CK5}, Conrey and Keating have investigated this connection for $k=\ell\in\mathbb{N}$ and provided a description of how Conjecture \ref{manc} leads to a conjecture for the $2k$th moments of the Riemann zeta function on the critical line. \\

The asymptotic order of $D_{h,k,{\ell}}(x)$ is fairly well understood. Regarding upper bounds, it follows from the general theorem of Nair and Tenenbaum \cite{NandT} that 
\begin{eqnarray}\label{boundsd}
D_{h,k,{\ell}}(x)=O_{h,k,{\ell}}(x\log^{k+{\ell}-2}x),
\end{eqnarray} 
with uniformity in the $h$ aspect following from the work of Henriot \cite{Hen}---the paper of Ng and Thom cited above addresses these matters in detail. However,  when $k,{\ell}\geq 3$, explicit bounds on the size of the constant implied in (\ref{boundsd}) have not yet appeared in the literature. Regarding lower bounds, the best general result in the literature is due to Ng and Thom \cite{NgT}, who showed that for $k,{\ell}\geq 3$  there is a $B_{k,{\ell}}>0$ such that for 
\begin{eqnarray}
h\leq \exp\left(B_{k,{\ell}}(\log x\log\log x)^{(\min(k,{\ell})-1)/(\min(k,{\ell})-1.99)}\right)\nonumber
\end{eqnarray} 
we have
\begin{eqnarray}\label{nandthombound}
\frac{D_{h,k,{\ell}}(x)}{x\log^{k+{\ell}-2}x}\geq \left(1+O_{k,{\ell}}\left(\frac{\log\log h}{\log x}\right)\right)\frac{2^{2-k-{\ell}}C_{k,{\ell}}f_{k,{\ell}}(h)}{(k-1)!({\ell}-1)!}.
\end{eqnarray} 
Moreover, regarding averages over $h$, Matomaki, Radziwill and Tao \cite{TMR} have recently shown that the conjectured asymptotic (\ref{lead}) holds for $k$, ${\ell}\geq 2$ and almost all $h\leq H$ provided that $x^{8/33+\epsilon}\leq H\leq x^{1-\epsilon}$, improving on previous work of Baier, Browning, Marasingha and Zhao \cite{BBMZ} on the case $k={\ell}=3$.\\

The additive divisor problem is closely related to the problem of improving the ``exponent of distribution'' for the generalised divisor problem in arithmetic progressions. An exponent of distribution is a lower bound on the lengths of arithmetic progressions $n\equiv h\pmod q$, $(h,q)=g$, in which $d_k(n)$ is ``well distributed'': 
\begin{definition}\label{def1}
A real number $0< \theta_{g,k}\leq 1$ is an exponent of distribution for $d_k(n)$ if for every fixed  $\epsilon>0$, $q\leq x^{\theta_{g,k}-\epsilon}$ and each residue class $h\not\equiv 0 \pmod q$,  $(h,q)=g$, we have
\begin{eqnarray}\label{sume}
\sum_{\substack{n\leq x\\n\equiv h \pmod q}} d_k(n)= \frac{1}{\phi\left(q/g\right)}
{\emph{Res}}\left(\frac{x^{s}}{s}\sum_{(n,q)=g}\frac{d_k(n)}{n^s},s=1     \right)
+O_{\epsilon,\delta,k}\left(\frac{x^{1-\delta}}{\phi\left(q/g\right)}\right)\nonumber\\
\end{eqnarray}
for some fixed $\delta>0$.
\end{definition}
A statement that is slightly stronger than (\ref{sume}) in terms of the dependence of $q$ and $g$ on $x$ is 
\begin{eqnarray}\label{sume2}
\sum_{\substack{n\leq x\\n\equiv h \pmod q}} d_k(n)= \frac{1}{\phi\left(q/g\right)} \sum_{n\leq x/g}\chi_0(n)d_k(gn)+O_{\epsilon,\delta,k}\left(\frac{x^{1-\delta}}{\phi\left(q/g\right)}\right)
\end{eqnarray}
where $\chi_0$ is the principal Dirichlet character to the modulus $q/g$. It is expected that $\theta_{g,k}=1$ for all $k$ provided that $g$ is not large, say $g\leq x^{1-\epsilon}$, but these are notoriously difficult problems for all $k\geq 2$. The strongest results in the literature are as follows. For $k=2$, Hooley \cite{H} established that  
we may take $\theta_{1,2}= 2/3$. We have $\theta_{g,3}= 21/41$ for all $g$ due to Heath-Brown \cite{H2}, $\theta_{1,4}=1/2$ due to Linnik \cite{Lin2}, and $\theta_{1,5}= 9/20$, $\theta_{1,6}= 5/12$ and $\theta_{1,k}= 8/3k$ for $k\geq 7$ due to Friedlander and Iwaniec \cite{FI}. For $k> 2$, the only known $k$ for which an exponent of distribution greater than $1/2$ is known is $k=3$, and both proofs (including the inferior exponent $58/115$ attributed to Friedlander and Iwaniec above) depend on Deligne's Riemann hypothesis for algebraic varieties over finite fields. For specific moduli, further increments have also been achieved. For instance, Fouvry, Kowalski and Michel \cite{FKM} have shown that essentially (\ref{sume}) holds for $k=3$ for all primes $q\leq x^{12/23}$. Moreover, Nguyen \cite{Ngy} has shown that a certain average exponent of distribution is sufficient to yield the main term in the $h=1$ and $k=\ell=3$ case of Conjecture \ref{manc}.

With the exception of Heath-Brown's
result for $k=3$, the above results on exponents of distribution are stated only for $g=1$, which is usually because $g=1$ is the only value that is required in applications to primes in arithmetic progressions. For applications to additive divisor sums, we require exponents of distribution for all $g$ in some range as $x\rightarrow\infty$. However, for each $k$ the distinction between the problems of establishing the existence of exponents of distribution $\theta_{g,k}$ is a relatively minor technicality and the outcome is essentially the same for all $g$. Under this proviso, we shall simply write $\theta_k=\theta_{g,k}$ because exponents of distribution appear as variables in this work.

\subsection{The minorants}\label{minorants}  The main idea in this work is that the (suitably normalised) minorants  
\begin{eqnarray} \label{ddefa}
d_{\ell}(n,A)=\sum_{q|n:q\leq n^A}d_{{\ell}-1}(q)\hspace{1cm}A\in(0,1].
\end{eqnarray} 
provide good approximations to $d_k(n)$ in arithmetic progressions. To explain what this means, we note that if $f:\mathbb{N}\rightarrow\mathbb{C}$, then 
\begin{eqnarray}\label{approxa}
\sum_{n\leq x}\left(\sum_{\substack{d|n\\d\leq n^A}}f(d)\right)=x\sum_{n\leq x^A}\frac{f(n)}{n}+O\left(\sum_{n\leq x^A}\frac{|f(n)|}{n^{1-1/A}}\right)\hspace{1cm}A\in(0,1],\nonumber\\
\end{eqnarray}
uniformly for $A\geq A_0>0$. Taking $f(n)=d_{k-1}(n)$ in (\ref{approxa}) it is elementary that $d_k(n,A)$
 approximates $A^{k-1}d_k(n)$ in the mean, that is
\begin{eqnarray}\label{diffb}
\sum_{n\leq x}d_k(n,A)=A^{k-1}\sum_{n\leq x}d_k(n)+O_A\left(x\log^{k-2}x\right).
\end{eqnarray}
The analogue of this principle for arithmetic progressions is 
\begin{eqnarray}\label{approxa2}
\sum_{\substack{n\leq x\\n\equiv h\pmod q}}\left(\sum_{\substack{d|n\\d\leq n^A}}f(d)\right)=\frac{x}{q}\sum_{\substack{n\leq x^A\\(n,q)|h}}\frac{(n,q)f(n)}{n}+O\left(\sum_{n\leq x^A}\frac{|f(n)|}{n^{1-1/A}}\right),
\end{eqnarray}
 which may be proved by interchanging the order of summation and trivially estimating the length of the resulting arithmetic progression. Yet, in applications to correlation problems, the error term in (\ref{approxa2}) is too weak; typically we need a factor of $1/q$, uniformly for $q\leq x^{C}$ as $x\rightarrow\infty$ for some $C>1-A$. \\

 Our first theorem (which is proved is Section \ref{arithproof}) refines (\ref{approxa2}) when $f(n)=d_{k-1}(n)$ with a suitable error term.

\begin{theorem}\label{arith}If $h,k\in\mathbb{N}$ and $\epsilon>0$ are fixed and $q\leq x^{\min(\theta_k,A\theta_{k-1})-\epsilon}$, then 
\begin{eqnarray}\label{mainpoly0}
\sum_{\substack{n\leq x\\n\equiv h \pmod q}} d_k(n,A)=A^{k-1}\sum_{\substack{n\leq x\\n\equiv h \pmod q}}d_k(n)+ O_{A,\epsilon,h,k}\left(\frac{x\log^{k-2}x}{q}\right).
\end{eqnarray}
In fact we prove the following 
\begin{eqnarray}
\sum_{\substack{n\leq x\\n\equiv h \pmod q}} d_k(n,A)=\frac{x}{q}\sum_{\substack{n\leq x^A\\(n,q)|h}}\frac{(n,q)d_{k-1}\left(n\right)}{n}+O_{A,\epsilon,h,k}\left(\frac{x\log^{k-2}x}{q}\right).
\end{eqnarray}\end{theorem}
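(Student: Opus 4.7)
The plan is to prove the second displayed equation of the theorem first, via interchange of summation and the exponent of distribution for $d_{k-1}$, and then to deduce the first equation by matching main terms using the exponent of distribution for $d_k$.

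First, writing $d_k(n,A) = \sum_{d \mid n,\, d \leq n^A} d_{k-1}(d)$ and swapping the order of summation gives
\[
\sum_{\substack{n \leq x \\ n \equiv h \pmod q}} d_k(n,A) \;=\; \sum_{\substack{d \leq x^A \\ (d,q) \mid h}} d_{k-1}(d)\, N_d,
\]
where $N_d$ counts integers $n \in [d^{1/A}, x]$ with $n \equiv h \pmod q$ and $d \mid n$. Parametrising $n = dm$ and setting $g = (d, q)$, the values of $m$ lie in a single residue class modulo $q/g$ inside the interval $[d^{1/A-1}, x/d]$, yielding $N_d = (x - d^{1/A})(d, q)/(dq) + O(1)$. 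Extracting the leading piece produces the target main term $\tfrac{x}{q}\sum_{d \leq x^A,\, (d,q) \mid h} (d,q)\, d_{k-1}(d)/d$, together with an ``edge correction'' from the $d^{1/A-1}$ term and an aggregate of the $O(1)$ counting errors.

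The second step --- bounding these error contributions uniformly for $q$ in the stated range --- is the technical core. The aggregate of the $O(1)$ errors is $O(x^A \log^{k-2} x)$, which is acceptable only when $q \leq x^{1-A}$. To cover $q$ up to $x^{A\theta_{k-1} - \epsilon}$, I would decompose the $d$-sum into dyadic intervals $(D, 2D]$ and, for those with $D > x/q$, swap summation back to the form
\[
\sum_{\substack{d \in (D, 2D] \\ (d, q) \mid h}} d_{k-1}(d)\, N_d \;=\; \sum_{\substack{n \leq x \\ n \equiv h \pmod q}}\ \sum_{\substack{d \mid n \\ D < d \leq \min(2D, n^A)}} d_{k-1}(d).
\]
In this form the exponent of distribution $\theta_{k-1}$, applied at scale $D$, controls the equidistribution of the inner divisor-weighted sums in residue classes modulo $q$; the hypothesis $q \leq x^{A\theta_{k-1} - \epsilon}$ is precisely what ensures $q \leq D^{\theta_{k-1} - \epsilon}$ for every relevant $D \leq x^A$. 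A similar dyadic analysis handles the edge-correction term, the nontrivial input being again the exponent of distribution for $d_{k-1}$. Summing the dyadic contributions produces the claimed total error $O(x \log^{k-2} x / q)$, establishing the second displayed equation.

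For the first equation, apply the exponent of distribution for $d_k$ at scale $x$ (permitted by $q \leq x^{\theta_k - \epsilon}$) to evaluate $\sum_{n \equiv h \pmod q} d_k(n)$, and compare: a residue computation on the Dirichlet series $\sum_{(n,q)=g} d_{k-1}(n) (n,q)/n^s$ shows that $A^{k-1}$ times the main term for $d_k$ coincides with the main term already obtained for $d_k(n,A)$, up to $O(x \log^{k-2} x / q)$ --- ultimately because $\int_1^{x^A} t^{-1}(\log t)^{k-2}\, dt = A^{k-1}\int_1^{x} t^{-1}(\log t)^{k-2}\, dt + O(\log^{k-2} x)$ at the level of the leading $\log^{k-1}$ asymptotic. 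The main obstacle I anticipate is the dyadic error bookkeeping in Step 2: one must verify that the swap-back sums, when controlled via $\theta_{k-1}$, aggregate cleanly to the target error across all dyadic scales, and in particular that no unwanted factors of $d_k(q)$ or extra logarithms escape the analysis near the upper endpoint $D \sim x^A$, where the exponent-of-distribution input is operating at the boundary of its hypothesis.
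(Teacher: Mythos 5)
Your reduction to the interchanged sum $\sum_{d\le x^A,\,(d,q)|h}d_{k-1}(d)N_d$ and your diagnosis of the bottleneck (the aggregated $O(1)$ counting errors are acceptable only for $q\le x^{1-A}$) are both correct, and your passage from the second display to the first matches the paper, which likewise proves the second display and then remarks that the rest follows by partial summation. The gap is in your Step 2. Your swap-back at scale $D$ amounts to applying the exponent of distribution for $d_{k-1}$ to sums of $d_{k-1}(d)$ over $d\in(D,2D]$ in a residue class modulo $q/(m,q)$, which requires $q\le D^{\theta_{k-1}-\epsilon}$, i.e.\ $D\ge q^{1/(\theta_{k-1}-\epsilon)}$. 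Your claim that the hypothesis $q\le x^{A\theta_{k-1}-\epsilon}$ ``ensures $q\le D^{\theta_{k-1}-\epsilon}$ for every relevant $D\le x^A$'' is false: it ensures this only for $D$ essentially at the top of the range, $D\approx x^{A}$. Consequently the dyadic blocks with $x/q<D<q^{1/\theta_{k-1}}$ are covered neither by trivial counting (per-block error $O(D\log^{k-2}D)$, which exceeds $x\log^{k-2}x/q$) nor by your swap-back, and this middle range is nonempty precisely in the interesting regime: e.g.\ for $k=2$, $A=2/3$, $q=x^{2/3-\epsilon}$ it leaves $D\in(x^{1/3},x^{2/3})$ uncovered. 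Since in these blocks the weight $d_{k-1}(d)$ sits on the short variable $d$ while the unweighted variable $m=n/d$ is the long one, no rearrangement of a single block gives access to equidistribution.

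The missing idea is the paper's global divisor switch: write $d_k(n,A)=d_k(n)-\sum_{e|n,\;e<x^{1-A}}d_{k-1}(n/e)+(\text{correction from } nx^{A-1}<d\le n^A)$. The first term is handled by $\theta_k$ at scale $x$; in the second, each inner sum $\sum_{m\le x/e,\ em\equiv h\,(q)}d_{k-1}(m)$ runs over a \emph{long} range $x/e\ge x^A$ with the weight on the long variable, so $\theta_{k-1}$ applies uniformly under $q\le x^{A\theta_{k-1}-\epsilon}$; the boundary correction is bounded the same way at scale $x^A$. The main term coming from $\sum_{n\equiv h}d_k(n)$ then cancels against the main terms of the codivisor sums to leave exactly $\frac{x}{q}\sum_{n\le x^A,\,(n,q)|h}(n,q)d_{k-1}(n)/n$; this cancellation, which occupies most of the paper's proof, has no analogue in your block-by-block scheme. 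To repair your argument you would need to replace the dyadic decomposition by this complementary-divisor decomposition (or restrict to $A\le 1/2$, where your trivial bound already suffices and the theorem is easy).
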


Theorem \ref{mainlem} (which is proved in Section \ref{pott}) gives an asymptotic expansion for the correlation of $d_k(n)$ and $d_{\ell}(n,A)$.
\begin{theorem}\label{mainlem}If $A<\theta_{k}$ and $h,k,{\ell}\in\mathbb{N}$ are fixed, 
then there is a $\delta=\delta(k,{\ell})>0$ and a polynomial $P_{A,h,k,{\ell}}$ of degree $k+{\ell}-2$ such that  
\begin{eqnarray}\label{mainpoly2}
\sum_{n\leq x}d_k(n+h)d_{\ell}(n,A)=xP_{A,h,k,{\ell}}(\log x)+O_{A,h,k,{\ell}}\left(x^{1-\delta}\right).
\end{eqnarray}
An explicit formula for $P_{A,h,k,{\ell}}$ is given in (\ref{polydef}). In particular, the coefficient of the leading term is $A^{{\ell}-1}C_{k,{\ell}}f_{k,{\ell}}(h)/(k-1)!({\ell}-1)!$ where $C_{k,{\ell}}$ and $f_{k,{\ell}}(h)$ are defined in (\ref{c}) and (\ref{fform}). 
\end{theorem}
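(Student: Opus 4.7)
The plan is to expand $d_l(n,A) = \sum_{q \mid n,\, q \leq n^A} d_{l-1}(q)$, swap the order of summation, and apply Chace's exponent of distribution (\ref{chace}) to the resulting sums of $d_k$ over arithmetic progressions. Writing $n = qm$ and $n' = qm + h$, the condition $q \leq n^A$ becomes $n' \geq q^{1/A} + h$, so
\begin{eqnarray}
\sum_{n\leq x}d_k(n+h)d_l(n,A) = \sum_{q\leq x^A} d_{l-1}(q) \bigl(T_1(q) - T_2(q)\bigr), \nonumber
\end{eqnarray}
where $T_j(q) = \sum_{\substack{n' \leq Y_j \\ n' \equiv h \pmod q}} d_k(n')$ with $Y_1 = x + h$ and $Y_2 = q^{1/A} + h$.

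Since $A < \theta_k$, for small $\epsilon > 0$ the inequality $q \leq x^A \leq x^{\theta_k - \epsilon}$ permits the exponent of distribution for $T_1$; rewritten as $q \leq (q^{1/A})^{\theta_k - \epsilon}$, the same inequality applies to $T_2$. Each $T_j$ therefore equals
\begin{eqnarray}
M_k(Y_j,q,h) := \frac{1}{\phi(q/g)}\,\mathrm{Res}_{s=1}\!\left(\frac{Y_j^s}{s}\sum_{(n,q)=g}\frac{d_k(n)}{n^s}\right),\qquad g=(h,q), \nonumber
\end{eqnarray}
up to an error of size $Y_j^{1-\delta}/\phi(q/g)$. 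Summed against $d_{l-1}(q)$ over $q \leq x^A$, these errors contribute $O(x^{1-\delta}\log^C x) = O(x^{1-\delta''})$ for some $\delta'' > 0$.

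The remaining main terms define $P_{A,h,k,l}$. Using the identity
\begin{eqnarray}
\sum_{(n,q)=g}\frac{d_k(n)}{n^s} = \zeta(s)^k \prod_{p\mid q} \frac{d_k(p^{v_p(g)})(1-p^{-s})^k}{p^{v_p(g)s}}, \nonumber
\end{eqnarray}
each $M_k(Y_j,q,h)$ expands as $Y_j$ times a polynomial of degree $k-1$ in $\log Y_j$ with leading coefficient $G_{q,g}(1)/((k-1)!\phi(q/g))$, where $G_{q,g}(1) := \prod_{p\mid q}d_k(p^{v_p(g)})(1-p^{-1})^k/p^{v_p(g)}$. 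The subtracted $M_k(Y_2,q,h)$ contributions have total size $O(x\log^{k+l-3}x)$ (since $q^{1/A} \leq x$ costs one logarithmic factor in the partial summation over $q$), so they only perturb lower-order coefficients of $P_{A,h,k,l}$. The leading coefficient arises from $M_k(Y_1,q,h)$: setting $F(q) := d_{l-1}(q) G_{q,(h,q)}(1)/\phi(q/(h,q))$, the Dirichlet series $\sum_q F(q)/q^s$ factors as $\zeta(s+1)^{l-1}E(s)$ with $E$ holomorphic at $s=0$, so a Selberg--Delange-type argument gives $\sum_{q\leq Q}F(q) \sim E(0)\log^{l-1}Q/(l-1)!$ and hence a leading coefficient $A^{l-1}E(0)/((k-1)!(l-1)!)$ for $P_{A,h,k,l}$.

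The main obstacle will be the explicit verification that $E(0) = C_{k,l}f_{k,l}(h)$. This is a factor-by-factor Euler product comparison: at primes $p\nmid h$, summing $F(p^\alpha)$ over all $\alpha\geq 0$ (crucially including $\alpha=0$) and multiplying by $(1-p^{-1})^{l-1}$ produces $(1-p^{-1})^{l-1}+(1-p^{-1})^{k-1}-(1-p^{-1})^{k+l-2}$, matching the local factor of $C_{k,l}$. At primes $p\mid h$, one splits the local sum according to $v_p(g)=\min(\alpha,\gamma)$ with $\gamma=v_p(h)$ and compares with the numerator of $f_{k,l}(h)$ in (\ref{fform}); the factorization $C_{k,l}=\prod_p(1-p^{-1})^{k+l-2}[(1-p^{-1})^{1-k}+(1-p^{-1})^{1-l}-1]$ absorbs the denominator of $f_{k,l}(h)$ to produce the exact match.
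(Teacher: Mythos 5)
Your decomposition is exactly the paper's: expand $d_l(n,A)$, swap summation, apply the exponent of distribution to the inner progressions with endpoints $x+h$ and $q^{1/A}+h$, and evaluate the resulting sums over $q$ via an Euler-product factorization. Two points need repair. First, your identity for $\sum_{(n,q)=g}d_k(n)n^{-s}$ is wrong at primes $p\mid q$ with $v_p(g)=v_p(q)$ (i.e.\ $p^{v_p(q)}\mid h$): there the condition $(n,q)=g$ forces only $v_p(n)\geq v_p(g)$, not $v_p(n)=v_p(g)$, so the local factor is $(1-p^{-s})^k\sum_{\beta\geq v_p(g)}d_k(p^{\beta})p^{-\beta s}$ rather than $d_k(p^{v_p(g)})(1-p^{-s})^kp^{-v_p(g)s}$. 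This case distinction (the paper's function $b_{h,k}(s,q)$) is exactly what produces the tail sums $\sum_{\alpha}^{\infty}d_k(p^{\beta})p^{-\beta}$ in (\ref{fform}); with your formula as displayed, $E(0)$ would not equal $C_{k,l}f_{k,l}(h)$. Your closing paragraph shows you expect those tails, so the fix is local, but the displayed identity must be corrected before the Euler-product comparison can succeed.

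Second, and more seriously for the statement as claimed: you only \emph{bound} the subtracted terms $M_k(Y_2,q,h)$ by $O(x\log^{k+l-3}x)$ and declare that they ``perturb lower-order coefficients.'' A size bound cannot be absorbed into a polynomial expansion with error $O(x^{1-\delta})$; as written, your argument proves only the leading-order asymptotic with an error one logarithm smaller, not (\ref{mainpoly2}). To obtain the theorem you must actually evaluate $\sum_{q\leq x^A}d_{l-1}(q)M_k(q^{1/A}+h,q,h)$ as $x$ times an explicit polynomial of degree $k+l-3$ in $\log x$ plus a power-saving error, which requires running the Perron/residue analysis on sums of the shape $\sum_{q\leq Q}\varphi_{h,k,l}(q,s)q^{1/A}\log^{j}q$. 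This is the content of the paper's ``Evaluation of the secondary term'' (the coefficients $a_{A,h,k,l,m}$ of Definition \ref{forma}) and is the longest computation in the proof; your proposal omits it. The same remark applies to your Selberg--Delange step: you need the full polynomial expansion of $\sum_{q\leq Q}F(q)\log^{j}(Q/q)$ with power-saving error (the paper obtains $O(Q^{\epsilon-2/l})$ from the generalised Dirichlet divisor problem), not merely the leading asymptotic.
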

\noindent We note here that if $\theta_k>1/2$ and ${\ell}=2$, then $A=1/2$ is admissible in Theorem \ref{mainlem}, which thus yields an alternative proof of  (\ref{adv}) in such cases. For example, in Section \ref{append} we carry out this calculation in the case $k={\ell}=2$ to reproduce Estermann's asymptotic expansion (\ref{est}) explicitly. \\

Theorem \ref{anotherpoly} (which is proved in Section \ref{pots}) gives an asymptotic formula for the correlation of $d_k(n,A)$ with $d_{\ell}(n,B)$.

\begin{theorem}\label{anotherpoly} If $A\leq 1$, $B<\min(\theta_k,A\theta_{k-1})$ and $h,k,{\ell}\in\mathbb{N}$ are fixed, then 
\begin{eqnarray}\label{mainpoly1}
\sum_{n\leq x}d_k(n+h,A)d_{\ell}(n,B)&=&\frac{A^{k-1}B^{{\ell}-1}C_{k,{\ell}}f_{k,{\ell}}(h)}{(k-1)!({\ell}-1)!}x\log^{k+{\ell}-2} x\nonumber\\
&+&O_{A,B,h,k,{\ell}}\left(x \log^{k+{\ell}-3}\right)
\end{eqnarray}
\end{theorem}

\subsection{Applications to the additive divisor problem}\label{secres2} 
The first three results of this section follow immediately from the results of Section \ref{minorants}. Firstly, Corollary \ref{t1} sharpens the lower bound (\ref{nandthombound}) when $h$ is fixed and $k$ is sufficiently large in comparison with ${\ell}$, providing progress on the question raised in the concluding remarks 1 of \cite{NgT}.
\begin{corollary}\label{t1}
For fixed $h,k,{\ell}\in\mathbb{N}$ we have
\begin{eqnarray}\label{mybound2}
\liminf_{x\rightarrow\infty}\frac{D_{h,k,{\ell}}(x)}{x\log^{k+{\ell}-2}x}\geq \theta_k^{{\ell}-1}\frac{C_{k,{\ell}}f_{k,{\ell}}(h)}{(k-1)!({\ell}-1)!}.
\end{eqnarray}
\end{corollary}
\begin{proof}
This is immediate from  Theorem \ref{mainlem} because $d_{\ell}(n)\geq d_{\ell}(n,A)$.
\end{proof}
\noindent 
For instance, given Heath-Brown's exponent $\theta_3=21/41$, it follows from Corollary \ref{t1} that
\begin{eqnarray}\label{mybound3}
\liminf_{x\rightarrow\infty}\frac{D_{h,3,3}(x)}{x\log^{4}x}\geq 0.262\frac{C_{3,3}f_{3,3}(h)}{4}. \nonumber
\end{eqnarray}

Corollary \ref{t2} gives an equivalent condition for the conjectured asymptotic (\ref{lead}). 
\begin{corollary}\label{t2} For fixed $h,k,{\ell}\in\mathbb{N}$, the asymptotic (\ref{lead}) holds if and only if 
\begin{eqnarray}\label{diff}
\sum_{n\leq x}d_k(n+h)\left(d_{\ell}(n)-B^{1-l}d_{\ell}(n,B)\right)=o\left( x\log^{k+{\ell}-2} x    \right)
\end{eqnarray}
for some (and therefore every) $B< \theta_k$.
\end{corollary}
\begin{proof}
 Compare (\ref{lead}) with Theorem \ref{mainlem} and note that the latter result holds for all $B<\theta_k$, while (\ref{lead}) does not depend on $B$ at all.
\end{proof}

In support of the plausibility of (\ref{diff}), we note that  
\begin{corollary}\label{t6} If $A<\theta_{\ell}$, $B<\min(\theta_k,A\theta_{k-1})$ and  $h,k,{\ell}\in\mathbb{N}$ are fixed, then 
\begin{eqnarray}
\sum_{n\leq x}d_k(n+h,A)\left(d_{\ell}(n)-B^{1-{\ell}}d_{\ell}(n,B)\right)=O_{A,B,h,k,{\ell}}\left( x\log^{k+{\ell}-3} x    \right).
\end{eqnarray}
\end{corollary}
\begin{proof}
This follows from Theorem \ref{anotherpoly} by swapping variables $A,B$ and $k,{\ell}$.
\end{proof}

Our last result is a Tauberian theorem (which is proved in Section \ref{pot3}). This is analogous to the relationship between the Prime Number Theorem and the non-vanishing of $\zeta(1+it)$. 
\begin{theorem}\label{t3}Let $h,k,{\ell}\in\mathbb{N}$ and $0\leq y<\infty$ be fixed, then the function 
\begin{eqnarray}
\mathcal{D}_{h,k,{\ell}}(s,y)=\sum_{n=1}^{\infty}\frac{d_k(n+h)d_{\ell}\left(n,\frac{y}{\log n}\right)}{(n+h)^s}\hspace{1cm}(\sigma>1) \nonumber
\end{eqnarray}
has an analytic continuation to the complex plane except for a pole of order $k-1$ at $s=1$ and, if the limit
\begin{eqnarray}
\lim_{y\rightarrow\infty}\mathcal{D}_{h,k,{\ell}}(1+it,y) \nonumber
\end{eqnarray} is continuous for $t\neq 0$, then we have
\begin{eqnarray}\label{asymor}
\frac{D_{h,k,{\ell}}(x)}{x\log^{k+{\ell}-2}x}\sim\frac{C_{k,{\ell}}f_{k,{\ell}}(h)}{(k-1)!({\ell}-1)!} \nonumber
\end{eqnarray}
as $x\rightarrow\infty$.
\end{theorem}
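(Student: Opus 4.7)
My plan is to first establish the analytic continuation of $\mathcal{D}_{h,k,l}(s,y)$ by a character decomposition, and then to deduce the asymptotic via a Wiener--Ikehara--type Tauberian argument coupled to the limit $y\to\infty$.

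Interchanging the order of summation, and using the identity $n^{y/\log n}=e^y$, gives
\[
\mathcal{D}_{h,k,l}(s,y) = \sum_{q \leq e^y} d_{l-1}(q) \sum_{\substack{m > h \\ m \equiv h \pmod{q}}} \frac{d_k(m)}{m^s}.
\]
For each $q$, setting $g=(h,q)$, the inner Dirichlet series over the arithmetic progression $m\equiv h\pmod q$ decomposes via Dirichlet characters modulo $q/g$ into a finite linear combination of products $L(s,\chi)^k$ twisted by explicit Euler factors at the primes dividing $q$. Each such factor is entire in $s$ except for the principal character's contribution at $s=1$; since the outer sum over $q$ is finite for each fixed $y$, this yields the meromorphic continuation of $\mathcal{D}_{h,k,l}(s,y)$ to $\mathbb{C}$ with only a pole at $s=1$.

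For the Tauberian deduction, the central observation is that when $n\leq e^y$ every divisor of $n$ automatically satisfies $q\leq e^y$, so $d_l(n,y/\log n)=d_l(n)$ on that range. In particular, whenever $y\geq\log x$,
\[
D_{h,k,l}(x)=\sum_{n\leq x} d_k(n+h)\, d_l\!\left(n,\tfrac{y}{\log n}\right).
\]
Since the coefficients are non-negative, and the hypothesis supplies continuous boundary values of $\lim_{y \to \infty}\mathcal{D}_{h,k,l}(1+it,y)$ for $t\neq 0$, I would apply Perron's formula to this partial sum, shift the contour to $\sigma=1$ (picking up the residue at $s=1$ as the main term), and use the hypothesis to control the remaining contour integral as $y\to\infty$. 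A direct Euler-product evaluation of the leading term of the pole---mirroring the local computation of $C_{k,l}$ and $f_{k,l}(h)$ in the Conrey--Gonek / Ng--Thom prediction---identifies the limiting leading coefficient with $C_{k,l}f_{k,l}(h)/[(k-1)!(l-1)!]$.

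The principal technical obstacle is the interchange of limits. For each fixed $y$ the pole of $\mathcal{D}_{h,k,l}(s,y)$ at $s=1$ has bounded order, and a direct application of Wiener--Ikehara to a fixed $y$ yields only $x\log^{k-1}x$ growth; the extra factor $\log^{l-1}x$ required by the conjecture must emerge from the coupled growth of the leading residue as $y=y(x)\to\infty$ with $y\geq\log x$. Ensuring that the assumed continuity of $\lim_{y\to\infty}\mathcal{D}_{h,k,l}(1+it,y)$ along $\sigma=1$ translates into sufficient uniform control on the Perron contour to justify this coupled limit is the heart of the argument, exactly parallelling the way non-vanishing of $\zeta(1+it)$ is leveraged in the classical Wiener--Ikehara proof of the Prime Number Theorem that the theorem statement invokes as an analogy.
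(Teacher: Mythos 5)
Your route to the analytic continuation is the same as the paper's: interchange the two summations (using $n^{y/\log n}=e^{y}$ to see the divisor cutoff is the fixed quantity $e^{y}$), decompose each inner series $\sum_{m\equiv h\pmod q}d_k(m)m^{-s}$ over Dirichlet characters modulo $q/(h,q)$ into terms $L(s,\chi)^k$ times explicit Euler factors, and use the finiteness of the outer sum over $q$. You have also correctly identified the real structure of the deduction: for fixed $y$ the pole at $s=1$ has bounded order and can only produce $x\log^{k-1}x$, and the missing $\log^{l-1}x$ must come from letting the cutoff grow with $x$. This matches the paper, which writes $\mathcal{D}_{h,k,l}(s,Q)=\zeta^k(s)Z_{h,k,l}(s,Q)+B_{h,k,l}(s,Q)$, shows $Z_{h,k,l}(1,Q)\sim C_{k,l}f_{k,l}(h)\log^{l-1}Q/((k-1)!(l-1)!)$, and finally takes $Q=x$ so that the truncated correlation becomes $D_{h,k,l}(x)$ --- exactly your observation that the truncation is invisible once $y\geq\log x$.

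The gap is in the mechanism you propose for the final step. Perron's formula followed by a contour shift to $\sigma=1$ cannot be justified by the stated hypothesis: continuity of $\lim_{y\to\infty}\mathcal{D}_{h,k,l}(1+it,y)$ for $t\neq 0$ gives no decay in $t$, no integrability along the line, and no licence to move the contour onto (let alone across) the abscissa of the pole. This is precisely the situation that Tauberian theorems are designed to handle, and the paper's hypothesis is tailored to one of them: it applies the Delange--Ikehara theorem, which requires only non-negativity of the coefficients (which you noted) together with continuity of the boundary values of the series minus its polar part. You should replace the Perron step with a direct appeal to Delange--Ikehara, applied first for fixed $Q$ to obtain $D_{h,k,l}(x,Q)\sim c_k\,Z_{h,k,l}(1,Q)\,x\log^{k-1}x$ for an explicit constant $c_k$, and then argue that the assumed continuity of the $y\to\infty$ limit allows the theorem to be applied with the cutoff growing, after which $Q=x$ and the asymptotics of $Z_{h,k,l}(1,Q)$ give the conjectured leading term. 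As you say, making that coupled limit rigorous is the heart of the matter; be aware that the paper itself dispatches it in one sentence, so if you carry out your plan you will need to supply uniformity (in $Q$) in the Tauberian argument that neither your sketch nor the paper currently makes explicit.
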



\section{Some definitions}\label{coeffs}
\noindent Definitions \ref{ftg}---\ref{forma} will be needed in the course of the proofs.
\begin{definition} \label{ftg} For $j,n\in\mathbb{N}$ we define   
\begin{eqnarray}
a_n(j)=\frac{d^n}{ds^n}(s-1)^j\zeta^j(s)\biggr\vert _{s=1} \nonumber
\end{eqnarray}
and
\begin{eqnarray}\label{fdef}
c_n(j)=\frac{1}{n!}\frac{d^n}{ds^n}\frac{(s-1)^j\zeta^j(s)}{s}\biggr\vert _{s=1}=\sum_{r=0}^{n}\frac{(-1)^{n-r}a_r(j)}{r!}. \nonumber
\end{eqnarray}
\end{definition}
\begin{definition}\label{fth}
 For $h,k,{\ell}\in\mathbb{N}$ and $h=\prod p^{\gamma}$, we also define
\begin{eqnarray}
C_{k,{\ell}}(s,w)= \prod_p \left((1-p^{-w-1})^{{\ell}-1}+\frac{(1-p^{-s})^k} {1-p^{-1}} -    \frac{(1-p^{-s})^k(1-p^{-w-1})^{{\ell}-1}}{1-p^{-1}}\right)\nonumber
\end{eqnarray}
for complex numbers $s,w$ such that $\sigma+\Re w>0,k\sigma+\Re w>0,\sigma+(l-1)\Re w>0$ and $k\sigma+(l-1)\Re w>0$,
\begin{eqnarray}\label{fdef}
f_{h,k,{\ell}}(s,w)=\prod_{p|h} \frac{(1-p^{-1})\sum_{\alpha=0}^{\gamma}d_{{\ell}-1}(p^{\alpha})  
\sum_{\beta=\alpha}^{\infty}d_k(p^{\beta})p^{-\beta s-\alpha w}+d_k(p^{\gamma})\sum_{\alpha=\gamma+1}^{\infty}d_{{\ell}-1}(p^{\alpha}) p^{-\alpha (w+1)}}{(1-p^{-1})(1-p^{-s})^{-k}+(1-p^{-w-1})^{1-{\ell}}    -1} ,\nonumber
\end{eqnarray}
and
\begin{eqnarray}\label{dr}
C_{k,{\ell}}(s,w)f_{h,k,{\ell}}(s,w)=\sum_{q=1}^{\infty}\frac{\varphi_{h,k,{\ell}}(q,s)}{q^{w}}.
\end{eqnarray}
\end{definition}

\begin{definition}\label{form}
For $m<k$ and $n<{\ell}$ we define
\begin{eqnarray}\label{coeffse}
b_{h,k,{\ell},m,n}=\sum_{i=0}^{k-1-m}\sum_{j=0}^{{\ell}-1-n}\frac{a_{{\ell}-1-n-j}(l-1)c_{k-1-m-i}(k)}{({\ell}-1-n-j)!}
\frac{\partial^{i}}{i!\partial s^{i}}\frac{\partial ^{j}}{j!\partial w^{j}}\sum_{q=1}^{\infty}\frac{\varphi_{h,k,{\ell}}(q,s)}{q^{w}}\biggr\vert _{w=0,s=1} \nonumber \\
\end{eqnarray}
and note that the Dirichlet series in (\ref{coeffse}) converge absolutely. In particular, we have $b_{h,k,{\ell},k-1,{\ell}-1}=C_{k,{\ell}}f_{k,{\ell}}(h)$
where $C_{k,{\ell}}$ and $f_{k,{\ell}}(h)$ are defined in  (\ref{c}) and (\ref{fform}).
\end{definition}
\begin{definition}\label{forma}
Lastly, for $m<k+{\ell}-2$ and $0<A\leq 1$ we define
\begin{eqnarray}
a_{A,h,k,{\ell},m}&=&(-1)^{m}\sum_{j=m-{\ell}+2}^{k-1}{i\choose j}\sum_{r=m-{\ell}+2}^{j}\sum_{v=0}^{r-m+{\ell}-2}\frac{(-A)^{r-j-v+{\ell}-1}a_v({\ell}-1)(v-{\ell}+1)_r}{v!}\nonumber\\&\times &{{\ell}-v-2\choose j-r}
 \sum_{i=j}^{k-1}\frac{c_{k-1-i}(k)}{i!} \frac{\partial^{i-j}}{\partial s^{i-j}} \frac{\partial^{j+{\ell}-r-2}}{\partial w^{j+{\ell}-r-2}} \sum_{1}^{\infty}\frac{\varphi_{h,k,{\ell}}^{}(d,s)}{d^w}\biggr\vert _{w=0,s=1}.
\end{eqnarray}
\end{definition}

\section{Proof of Theorem \ref{arith}}\label{arithproof}
We begin by writing 
\begin{eqnarray}
d_k(n,A)&=&\left(d_k(n)-\sum_{\substack{d|n\\d\leq x^{1-A}}}d_{k-1}\left(\frac{n}{d}\right)-\sum_{\substack{d|n\\d> x^{1-A}}}d_{k-1}\left(\frac{n}{d}\right)\right)+d_k(n,A)
\nonumber\\&=&d_k(n)-\sum_{\substack{d|n\\d<x^{1-A}}}d_{k-1}\left(\frac{n}{d}\right)+\sum_{\substack{d|n\\nx^{A-1}\leq d\leq n^A}}d_{k-1}(d),
\end{eqnarray}
so that 
\begin{eqnarray}\label{prof}
\sum_{\substack{n\equiv h\pmod q\\n\leq x}}d_k(n,A) &=& \sum_{\substack{n\equiv h\pmod q\\n\leq x}}d_{k}(n)
-\sum_{d<x^{1-A}} \sum_{\substack{dn\equiv h\pmod q\\n\leq x/d}} d_{k-1} \left(n\right)\nonumber\\
&+&   \sum_{d\leq x^A}d_{k-1}(d)  \sum_{\substack{dn\equiv h\pmod q\\d^{\frac{1-A}{A}}\leq n\leq x^{1-A}}}   1.
\end{eqnarray}
Firstly, using Definition \ref{def1} in the form (\ref{sume2}), the first term on the right hand side of (\ref{prof}) is 
\begin{eqnarray}
\frac{1}{\phi\left(q/(h,q)\right)} \sum_{n\leq x/(h,q)}\chi_0(n)d_k((h,q)n)+O_{\epsilon,\delta,k}\left(\frac{x^{1-\delta}}{\phi\left(q/(h,q)\right)}\right)
\end{eqnarray}
 for $q\leq x^{\theta_k-\epsilon}$, where $\chi_0$ is the principal character to the modulus $q/(q,h)$. Secondly, the third term on the right hand side of (\ref{prof}) is 
\begin{eqnarray}\label{exa}
&\leq &  \sum_{n< x^{1-A}}\sum_{\substack{dn\equiv h\pmod q\\ d\leq x^{A}}}d_{k-1}(d)\nonumber\\
&\leq &\sum_{\substack{n< x^{1-A}\\(n,q)|h}}\sum_{\substack{d\equiv \overline{(n/(n,q))}(h/(n,q))\pmod {q/(n,q)}\\ d\leq x^{A}}}d_{k-1}(d).
\end{eqnarray}
Using (\ref{sume2}) with $q\leq x^{A\theta_{k-1}-\epsilon}$ and 
\begin{eqnarray}
g=\left(\overline{\left(\frac{n}{(n,q)}\right)}\left(    \frac{h}{(n,q)}\right),      \frac{q}{(n,q)}\right)=\frac{(h,q)}{(n,q)}
\nonumber
\end{eqnarray} 
the inner summation in (\ref{exa}) is
\begin{eqnarray}
&\ll_{A,\epsilon}&\frac{1}{\phi\left(\frac{q}{(h,q)}\right)} \sum_{m\leq x^A/g}\chi_0(m)d_{k-1}(gm)\nonumber\\
&=& \frac{1}{\phi\left(\frac{q}{(h,q)}\right)}\sum_{d\leq x^A}d_{k-2}(d)\chi_0(d)\sum_{m\leq x^A/dg}\chi_0(m)\nonumber\\
&\ll_{A,\epsilon}& \frac{x^A\prod_{p|\frac{q}{(h,q)}}\left(1-\frac{1}{p}\right)}{g\phi\left(\frac{q}{(h,q)}\right)}\sum_{d\leq x^A}\frac{d_{k-2}(d)}{d}\nonumber\\
&\ll_{A,\epsilon}&\frac{(n,q)}{q}x^A\log^{k-2}x\nonumber
\end{eqnarray}
so (\ref{exa}) is $\ll_{A,\epsilon,h} x(\log x)^{k-2}/q$ because $(n,q)|h$. As such, (\ref{prof}) is  
\begin{eqnarray}\label{prof2}
&=&\frac{1}{\phi\left(q/(h,q)\right)} \sum_{n\leq x/(h,q)}\chi_0(n)d_k((h,q)n)
\nonumber\\&-&\sum_{d<x^{1-A}} \sum_{\substack{dn\equiv h\pmod q\\n\leq x/d}} d_{k-1} \left(n\right)+   O_{A,\epsilon,h,k}\left(\frac{x\log^{k-2}x}{q}\right)\nonumber\\
&=&\frac{1}{\phi\left(q/(h,q)\right)} \sum_{n\leq x/(h,q)}\chi_0(n)d_k((h,q)n)\nonumber\\&-&\frac{1}{\phi\left(q/(h,q)\right)}\sum_{\substack{d<x^{1-A}\\(d,q)|h}}\sum_{n\leq (x/(h,q))/(d/(d,q))}\chi_0(n)d_{k-1}\left(\frac{(h,q)n}{(d,q)}\right)+O_{A,\epsilon,h,k}\left(\frac{x\log^{k-2}x}{q}\right)\nonumber\\
\end{eqnarray}
 for $q\leq x^{\min(\theta_k,A\theta_{k-1})-\epsilon}$, where we have used Definition \ref{def1} again to write the second sum on the right hand side of (\ref{prof2}) in terms of the principal character. We now write the second sum as 
 \begin{eqnarray}\label{prof3}
&-&\sum_{m\leq x/(h,q)}\sum_{\substack{d<x^{1-A}\\  (d,q)|h \\  d/(d,q)|m }}\chi_0\left(\frac{(d,q)m}{d} \right)d_{k-1}\left(\frac{(h,q)m}{d}\right)\nonumber\\
\end{eqnarray}
and note that, since $(d/(d,q),q/(h,q))=1$, the character $\chi_0((d,q)m/d)$ is non-zero precisely when $(m,q/(h,q))=1$ so we may replace it by $\chi_0(m)$. Thus, extending the sum over all $d$, (\ref{prof3}) is
\begin{eqnarray}
&=&-\sum_{m\leq x/(h,q)}\chi_0 \left(m\right)\sum_{\substack{ d/(d,q)|m\\(d,q)|h }}d_{k-1}\left(\frac{(h,q)m}{d}\right)
\nonumber
\end{eqnarray}
\begin{eqnarray}\label{proo6}
&+&\sum_{m\leq x/(h,q)}\chi_0 \left(m\right)\sum_{\substack{x^{1-A}\leq d\leq x/(h,q)\\  d/(d,q)|m \\(d,q)|h}}d_{k-1}\left(\frac{(h,q)m}{d}\right)\nonumber\\
&=&-\sum_{m\leq x/(h,q)}\chi_0 \left(m\right)d_{k}\left((h,q)m\right)
\nonumber\\&+&\sum_{m\leq x/(h,q)}\chi_0 \left(m\right)\sum_{\substack{x^{1-A}\leq d\leq x/(h,q)\\  d/(d,q)|m\\(d,q)|h }}d_{k-1}\left(\frac{(h,q)m}{d}\right),
\end{eqnarray}
where in the first sum above we have used the identity 
\begin{eqnarray}
d_{k}\left((h,q)m\right)=\sum_{\substack{ d/(d,q)|m\\(d,q)|h }}d_{k-1}\left(\frac{(h,q)m}{d}\right).\nonumber
\end{eqnarray}
By (\ref{prof2}) and (\ref{proo6}), (\ref{prof}) is
\begin{eqnarray}\label{prof4}
\frac{1}{\phi\left(q/(h,q)\right)}\sum_{m\leq x/(h,q)}\chi_0 \left(m\right)\sum_{\substack{x^{1-A}\leq d\leq x/(h,q)\\  d/(d,q)|m\\(d,q)|h }}d_{k-1}\left(\frac{(h,q)m}{d}\right)+O_{A,\epsilon,h,k}\left(\frac{x\log^{k-2}x}{q}\right).\nonumber\\
\end{eqnarray}
To evaluate the main term in (\ref{prof4}), we write this as
\begin{eqnarray}\label{abovew}
&=&\frac{1}{\phi\left(q/(h,q)\right)}\sum_{\substack{x^{1-A}\leq d\leq x/(h,q)\\  (d,q)|h }}  \sum_{m\leq (x/(h,q))/(d/(d,q))}\chi_0 \left(\frac{dm}{(d,q)}\right)   d_{k-1}\left(\frac{(h,q)m}{(d,q)}\right)\nonumber\\
&=&\frac{1}{\phi\left(q/(h,q)\right)}\sum_{\substack{x^{1-A}\leq d\leq x/(h,q)\\}} \sum_{\substack{n\leq x/d\\(d,q)|h\\((h,q)/(d,q))|n}}\chi_0 \left(\frac{dn}{(h,q)}\right)   d_{k-1}\left(n\right)\nonumber\\
&=&\frac{1}{\phi\left(q/(h,q)\right)}\sum_{\substack{n\leq x^{A}\\}}  \chi_0 \left(n\right) d_{k-1}\left(n\right)\sum_{\substack{x^{1-A}\leq d\leq x/(h,q)\\d\leq x/n\\(d,q)|h \\((h,q)/(d,q))|n}}\chi_0 \left(d\right).
\end{eqnarray}
Now the condition $(d,q)|h$  is automatic when $\chi_0 \left(d\right)$ is non zero. Since this implies that $(d,q)=(d,(h,q))$ and thus $(d,q)|(h,q)$, the conditions $(d,q)|h$ and $((h,q)/(d,q))|n$ may be replaced by the single condition $(h,q)|n$ so the above is simply 
\begin{eqnarray}
&&\frac{1}{\phi\left(q/(h,q)\right)}\sum_{\substack{n\leq x^A/(h,q)}}\chi_0 \left(n\right)   d_{k-1}\left((h,q)n\right)\sum_{\substack{x^{1-A}\leq d\leq x/n(h,q) }}\chi_0(d) \nonumber\\
&=&\frac{1}{\phi\left(q/(h,q)\right)}\sum_{\substack{n\leq x^A}/(h,q)}\chi_0 \left(n\right)   d_{k-1}\left((h,q)n\right)\sum_{\substack{d\leq x/n(h,q)}}\chi_0(d)+O_{A,h,k}\left(\frac{x\log^{k-2}x}{q}\right) \nonumber\\
&=&\frac{x}{q}\sum_{\substack{n\leq x^A/(h,q)}}\frac{\chi_0 \left(n\right)   d_{k-1}\left((h,q)n\right)}{n}+O_{A,\epsilon,h,k}\left(\frac{x\log^{k-2}x}{q}\right). \nonumber
\end{eqnarray}
Lastly, since $(n/(h,q),q/(h,q))=1$ is equivalent to $(n,q)|h$, this is
\begin{eqnarray}
\frac{x}{q}\sum_{\substack{n\leq x^A\\(n,q)|h}}\frac{(n,q)d_{k-1}\left(n\right)}{n}+O_{A,\epsilon,h,k}\left(\frac{x\log^{k-2}x}{q}\right)\nonumber
\end{eqnarray}
and, by partial summation, the remainder of the proof is straightforward.

\section{Proof of Theorem \ref{mainlem}}\label{pott}
To begin, we have 
\begin{eqnarray}\label{p1}
\sum_{n\leq x}d_k(n+h)d_{\ell}(n,A)&=&\sum_{n\leq x}d_k(n+h)\sum_{\substack{q|n\\q\leq n^A}}d_{{\ell}-1}(q)\nonumber\\
&=&\sum_{q\leq x^A}d_{{\ell}-1}(q)\sum_{\substack{q^{1/A}+h\leq n\leq x+h\\n\equiv h \pmod q}}d_k(n).
\end{eqnarray}\\

\noindent Using Definition \ref{def1} to evaluate the inner summations on the right hand side of (\ref{p1}), for $0<A<\theta_{k}$ we see that (\ref{p1}) is 
\begin{eqnarray}\label{p2}
\sum_{n\leq x}d_k(n+h)d_{\ell}(n,A)
&=&\sum_{q\leq x^A}\frac{d_{{\ell}-1}(q)}{\phi\left(\frac{q}{(h,q)}\right)}
\textrm{Res}\left(\frac{(x+h)^{s}}{s}\sum_{(n,q)=(h,q)}\frac{d_k(n)}{n^s},s=1\right)
\nonumber
\end{eqnarray}
\begin{eqnarray}
&-& \sum_{q\leq x^A}\frac{d_{{\ell}-1}(q)}{\phi\left(\frac{q}{(h,q)}\right)}
\textrm{Res}\left(\frac{(q^{1/A}+h-\delta_A(q))^{s}}{s}\sum_{(n,q)=(h,q)}\frac{d_k(n)}{n^s},s=1\right)\\
&+&O_{A,\epsilon,\delta,k}\left(x^{1-\delta}\sum_{q\leq x^A}\frac{d_{{\ell}-1}(q)}{\phi\left(\frac{q}{(h,q)}\right)}\right)\nonumber\\&+&O_{A,\epsilon,\delta,k}\left(x^{1-\delta}\sum_{q\leq x^A}\frac{d_{{\ell}-1}(q)(q^{1/A}+h-\delta_A(q))^{1-\delta}}{\phi\left(\frac{q}{(h,q)}\right)}\right),\nonumber
\end{eqnarray}

\noindent  where $\delta_A(q)=0$ or $1$ depending on whether $q^{1/A}$ is an integer or not. The summations in the error terms in the third and fourth lines of (\ref{p2}) are $O_{A,h}(\log^{{\ell}-1} x)$, so it remains to evaluate the first two terms. It is convenient to evaluate these terms in different ways due to the different ways in which each depends on $A$ and therefore $q$, which we shall do in Sections \ref{evpt} and \ref{evst} below.

\subsection{The primary term}\label{evpt}
We begin by evaluating the first term on the right hand side of (\ref{p2}).  Let $\chi_0$ denote the principal character to the modulus $q/g$, where $q=\prod p^{\alpha}$, $h=\prod p^{\gamma}$ and $g=(h,q)=\prod p^{\delta}$ so $\delta=\min(\alpha,\gamma)$. We have

\begin{eqnarray}\label{g}
\sum_{(n,q)=g}\frac{d_k(n)}{n^s}=\sum_{n=1}^{\infty}\frac{\chi_0(n)d_k(gn)}{(gn)^s}&=&\prod_p\sum_{\beta=0}^{\infty}d_k(p^{\beta+\delta})\chi_0(p^{\beta})p^{-(\beta+\delta)s}
\nonumber\\&=&L^k(s,\chi_0)b_{h,k}(s,q) \nonumber
\end{eqnarray}
where

\begin{eqnarray}\label{adef}
b_{h,k}(s,q)=\prod_{p|g}(1-\chi_0(p)p^{-s})^k\sum_{\beta=\delta}^{\infty}d_k(p^{\beta})\chi_0(p^{\beta-\delta})p^{-\beta s} \nonumber
\end{eqnarray}
is a multiplicative function of $g$ for all $k,s$. By Cauchy's theorem, the first term on the right hand side of (\ref{p2}) is

\begin{eqnarray}\label{mainres}
&=&\frac{1}{(k-1)!}\frac{\partial^{k-1}}{\partial s^{k-1}}\frac{(s-1)^k\zeta^k(s)Z_{h,k,{\ell}}\left(s,x^A\right)(x+h)^s}{s}\biggr\vert _{s=1}\nonumber\\
&=&\frac{1}{(k-1)!}\sum_{i=0}^{k-1}{k-1\choose i}\frac{\partial^{i}}{\partial s^{i}}Z_{h,k,{\ell}}\left(s,x^A\right)\biggr\vert _{s=1}\frac{\partial^{k-1-i}}{\partial s^{k-1-i}}\frac{(s-1)^k\zeta^k(s)(x+h)^s}{s}\biggr\vert _{s=1}\nonumber\\
\end{eqnarray}
where

\begin{eqnarray}\label{sum}
Z_{h,k,{\ell}}\left(s,x^A\right)=\sum_{q\leq x^A}\frac{d_{{\ell}-1}(q) }{\phi \left(\frac{q}{(h,q)}\right)}\prod_{p|\frac{q}{(h,q)}}\left(1-p^{-s}\right)^kb_{h,k}(s,q),
\end{eqnarray}
and so our first task is to find asymptotic formulae for $\frac{\partial^i}{\partial s^i}Z_{h,k,{\ell}}\left(s,Q\right)$ at $s=1$ as $Q\rightarrow\infty$. To proceed, we note that the factor 

\begin{eqnarray}
\frac{d_{{\ell}-1}(q) }{\phi \left(\frac{q}{(q,h)}\right)}\prod_{p|\frac{q}{(q,h)}}\left(1-p^{-s}\right)^k \nonumber
\end{eqnarray}
of the summand in (\ref{sum}) is a multiplicative function of $q$ for all $h,k,s$, and we shall now show that $b_{h,k}(s,q)$ is also. From the definition of $b_{h,k}(s,q)$ above, we have 

\begin{eqnarray}\label{nine}
b_{h,k}(s,q)&=&\prod_{\substack{p|(q,h)\\p|\frac{q}{(q,h)}}}\frac{d_k(p^{\delta(q)})}{p^{\delta(q)s}} \prod_{\substack{p|(q,h)\\p\nmid\frac{q}{(q,h)}}} 
(1-p^{-s})^k\sum_{\beta=\delta(q)}^{\infty}d_k(p^{\beta})p^{-\beta s}.
\end{eqnarray}
If $q=rt$ with $(r,t)=1$, we have $(rt,h)=(r,h)(t,h)$ and $\delta(rt)=\delta(r)+\delta(t)$ for every $p$ where $\delta(n)$ is the $p$-adic valuation of $(n,q)$, so the inclusion $p|(rt,h)$ in (\ref{nine}) is multiplicative, that is

\begin{eqnarray}\label{ff}
b_{h,k}(s,rt)&=&\prod_{\substack{p|(r,h)\\p|\frac{r}{ (r,h)} \frac{t}{(t,h)}   }}\frac{d_k(p^{\delta(r)})}{p^{\delta(r)s}}\prod_{\substack{ p|(r,h)\\  p\nmid\frac{r}{ (r,h)} \frac{t}{(t,h)}}}(1-p^{-s})^k\sum_{\beta=\delta(r)}^{\infty}d_k(p^{\beta})p^{-\beta s}
\nonumber\\
&\times&\prod_{\substack{p|(t,h)\\p|\frac{r}{ (r,h)} \frac{t}{(t,h)}}}\frac{d_k(p^{\delta(t)})}{p^{\delta(t)s}}\prod_{\substack{ p|(t,h)\\  p\nmid \frac{r}{ (r,h)} \frac{t}{(t,h)}}}(1-p^{-s})^k\sum_{\beta=\delta(t)}^{\infty}d_k(p^{\beta})p^{-\beta s}.
\end{eqnarray}
Since $p|r$ implies $p\nmid t$, the intersection of the sets $p|(r,h)$ and $p| t/(t,h)$ in (\ref{ff}) is already empty. It follows that if $p|(r,h)$ then the inclusion $p| t/(t,h)$ and exclusion $p\nmid t/(t,h)$ is superfluous, and vice-versa. Therefore

\begin{eqnarray}\label{ffs}
b_{h,k}(s,rt)&=&\prod_{\substack{p|(r,h)\\p|\frac{r}{ (r,h)  }}}\frac{d_k(p^{\delta(r)})}{p^{\delta(r)s}}\prod_{\substack{ p|(r,h)\\  p\nmid\frac{r}{ (r,h)   }}}(1-p^{-s})^k\sum_{\beta=\delta(r)}^{\infty}d_k(p^{\beta})p^{-\beta s}
\nonumber\\
&\times&\prod_{\substack{p|(t,h)\\p|\frac{t}{  (t,h)   }}}\frac{d_k(p^{\delta(t)})}{p^{\delta(rt)s}}\prod_{\substack{ p|(t,h)\\  p\nmid \frac{t}{(t,h)   }}}(1-p^{-s})^k\sum_{\beta=\delta(t)}^{\infty}d_k(p^{\beta})p^{-\beta s} \nonumber \\
&=&b_{h,k}(s,r)b_{h,k}(s,t). \nonumber
\end{eqnarray}
Thus

\begin{eqnarray}\label{sum2}
Z_{h,k,{\ell}}\left(s,x^A\right)=\sum_{q\leq x^A}\phi_{h,k,{\ell}}(s,q) \nonumber
\end{eqnarray}
where the summand
\begin{eqnarray}
\phi_{h,k,{\ell}}(s,q)=\frac{d_{{\ell}-1}(q) }{\phi \left(\frac{q}{(q,h)}\right)}\prod_{p|\frac{q}{(q,h)}}\left(1-p^{-s}\right)^k\prod_{\substack{p|(q,h)\\p|\frac{q}{(q,h)}}}\frac{d_k(p^{\delta(q)})}{p^{\delta(q)s}} \prod_{\substack{p|(q,h)\\p\nmid\frac{q}{(q,h)}}} 
(1-p^{-s})^k\sum_{\beta=\delta(q)}^{\infty}d_k(p^{\beta})p^{-\beta s}\nonumber
\end{eqnarray}
is a multiplicative function of $q$ for all $h,k,{\ell},s$. \\

\noindent Since $\phi_{h,k,{\ell}}(s,q)$ is multiplicative, we define the Euler product 
\begin{eqnarray}\label{prod}
\Phi_{h,k,{\ell}}(s,w)=\prod_p \sum_{\alpha=0}^{\infty}\phi_{h,k,{\ell}}(s,p^{\alpha})p^{-\alpha w} \nonumber 
\end{eqnarray}
for values of $w\in\mathbb{C}$ for which the right hand side converges absolutely. If $p\nmid h$ then $\phi_{h,k,{\ell}}(s,p^{\alpha})=\phi_{1,k,{\ell}}(s,p^{\alpha})$ which gives
\begin{eqnarray}\label{fac}
\Phi_{h,k,{\ell}}(s,w)=
C_{k,{\ell}}(s,w)f_{h,k,{\ell}}(s,w)\zeta^{{\ell}-1}(w+1),
\end{eqnarray}
where $f_{h,k,{\ell}}(s,w)$ and $C_{k,{\ell}}(s,w)$
are defined in (\ref{fdef}). It follows that for fixed $h,i,k,{\ell}$ the Dirichlet series 
\begin{eqnarray}\label{ser}
\frac{\partial^i}{\partial s^i}C_{k,{\ell}}(s,w)f_{h,k,{\ell}}(s,w)=\sum_{q=1}^{\infty}\frac{\partial^i}{\partial s^i}\frac{\varphi_{h,k,{\ell}}(q,s)}{q^{w}} \nonumber
\end{eqnarray}
is absolutely convergent and bounded for such values of $s,w$. Thus, using the relation
\begin{eqnarray}\label{conv}
\phi_{h,k,{\ell}}(s,q)=\sum_{d|q}\varphi_{h,k,{\ell}}(d,s)\frac{d_{{\ell}-1}(q/d)}{q/d},
\end{eqnarray}
we have
\begin{eqnarray}\label{lop}
\frac{\partial^i}{\partial s^i}Z_{h,k,{\ell}}\left(s,Q\right)&=&\frac{\partial^i}{\partial s^i}\sum_{q\leq Q}\sum_{d|q}\varphi_{h,k,{\ell}}(d,s)\frac{d_{{\ell}-1}(q/d)}{q/d}\nonumber\\
&=&\frac{\partial^i}{\partial s^i}\sum_{d\leq Q}\varphi_{h,k,{\ell}}(d,s)\sum_{q\leq Q/d}\frac{d_{{\ell}-1}(q)}{q}\nonumber\\
&=&\frac{\partial^i}{\partial s^i}\sum_{d\leq Q}\varphi_{h,k,{\ell}}(d,s)\Bigg(\sum_{j=0}^{{\ell}-1}\frac{a_{{\ell}-1-j}({\ell}-1)\log^j(Q/d)}{({\ell}-1-j)!j!}\\
&+&\frac{1}{2\pi i}\int_{\left(\epsilon-2/l\right)}\zeta^{{\ell}-1}(w+1)\frac{\left(Q/d\right)^wdw}{w}\Bigg)\nonumber\\
&=&\sum_{0}^{{\ell}-1}\frac{a_{{\ell}-1-j}({\ell}-1)}{j!({\ell}-1-j)!}\frac{\partial^i}{\partial s^i}\sum_{d\leq Q}\varphi_{h,k,{\ell}}(d,s)\log^j(Q/d)\nonumber\\
&+&
O\left(Q^{\epsilon-2/{\ell}}\sum_{d\leq Q}   \left| \frac{\partial^i}{\partial s^i} \varphi_{h,k,{\ell}}(d,s)  \right| d^{2/{\ell}-\epsilon}  \right),
\nonumber
\end{eqnarray}
which follows from classical results on the error term in the generalised Dirichlet divisor problem (see Titchmarsh \cite{Titch}, Section 12). Expanding $\log^j(Q/d)$ as a polynomial in $\log Q$, (\ref{lop}) is

\begin{eqnarray}\label{A}
&=&\sum_{j=0}^{{\ell}-1}\frac{a_{{\ell}-1-j}({\ell}-1)}{j!({\ell}-1-j)!}\sum_{n=0}^{j}{j\choose n}\log ^nQ\frac{\partial^i}{\partial s^i}\sum_{d\leq Q}\varphi_{h,k,{\ell}}(d,s)(-\log d)^{j-n}
+O\left(Q^{\epsilon-2/{\ell}}\right)
\nonumber\\
&=&\sum_{n=0}^{{\ell}-1}\frac{\log ^nQ}{n!}\sum_{j=0}^{{\ell}-1-n}\frac{a_{{\ell}-1-n-j}({\ell}-1)}{j!({\ell}-1-n-j)!}\frac{\partial^i}{\partial s^i}\frac{\partial^{j}}{\partial w^{j}}\sum_{d\leq Q}\frac{\varphi_{h,k,{\ell}}(d,s)}{d^{w}}\biggr\vert _{w=0}+O\left(Q^{\epsilon-2/{\ell}}\right).\nonumber\\
\end{eqnarray}\\

\noindent We also have 
\begin{eqnarray}\label{zalc}
&&\frac{\partial^{k-1-i}}{\partial s^{k-1-i}}\frac{(s-1)^k\zeta^k(s)(x+h)^s}{s}\biggr\vert _{s=1}\nonumber\\&=&
(x+h)(k-1-i)!\sum_{r=0}^{k-1-i}\frac{a_r(k)}{r!}\sum_{m=0}^{k-1-i-r}\frac{(-1)^{k-1-i-r-m}\log^m(x+h)}{m!}\nonumber\\
&=&(x+h)(k-1-i)!\sum_{m=0}^{k-1-i}\frac{\log^{k-1-i-m}(x+h)}{(k-1-i-m)!}c_m(k)
\end{eqnarray}
and setting $Q=x^A$ in (\ref{A}) and using (\ref{zalc}), we conclude that (\ref{mainres}) is 
\begin{eqnarray}\label{ftl}
&=&(x+h)\sum_{m=0}^{k-1}\sum_{n=0}^{{\ell}-1}\frac {A^nb_{h,k,{\ell},m,n}}{m!n!}\log^{m}(x+h)\log^{n}x\nonumber\\
&+&O\left((x+h)x^{\epsilon-2A/{\ell}}\sum_{i=0}^{k-1}\sum_{m=i}^{k-1}\frac{\log^{k-1-m}(x+h)|c_{m-i}(k)|}{(k-1-m)!}
    \sum_{d\leq x^A}   \left| \frac{\partial^i}{\partial s^i} \varphi_{h,k,{\ell}}(d,s)  \right|_{s=1} d^{2/{\ell}-\epsilon} \right),\nonumber\\ 
&=&x\sum_{m=0}^{k-1}\sum_{n=0}^{{\ell}-1}\frac {A^nb_{h,k,{\ell},m,n}}{m!n!}\log^{m+n}x+O_{h,k,{\ell}}\left(x^{1-2A/{\ell}+\epsilon} \right)\nonumber\\
\end{eqnarray}
where the coefficients $b_{h,k,{\ell},m,n}$ are defined in Section \ref{coeffs}.

\subsection{The secondary term}\label{evst}
We now evaluate the second term on the right hand side of (\ref{p2}). By Cauchy's theorem, this is

\begin{eqnarray}\label{mainres4}
&=&\frac{1}{(k-1)!}\frac{\partial^{k-1}}{\partial s^{k-1}}\frac{(s-1)^k\zeta^k(s)W_{h,k,{\ell}}\left(s,x^A\right)}{s}\biggr\vert _{s=1}\nonumber\\
&=&\frac{1}{(k-1)!}\sum_{i=0}^{k-1}{k-1\choose i}\frac{\partial^{i}}{\partial s^{i}}W_{h,k,{\ell}}\left(s,x^A\right)\biggr\vert _{s=1}\frac{\partial^{k-1-i}}{\partial s^{k-1-i}}\frac{(s-1)^k\zeta^k(s)}{s}\biggr\vert _{s=1}\nonumber\\
&=&\sum_{i=0}^{k-1}\frac{1}{i!}\frac{\partial^{i}}{\partial s^{i}}W_{h,k,{\ell}}\left(s,x^A\right)\biggr\vert _{s=1}c_{k-1-i}(k),
\end{eqnarray}
where
\begin{eqnarray}
W_{A,h,k,{\ell}}\left(s,Q\right)=\sum_{q\leq Q}\phi_{h,k,{\ell}}(s,q) \nonumber
(q^{1/A}+h-\delta_A(q))^s.
\end{eqnarray}
By (\ref{conv}) we have 

\begin{eqnarray}\label{w}
\frac{\partial^i}{\partial s^i}W_{A,h,k,{\ell}}\left(s,Q\right)\biggr\vert _{s=1}&=&\frac{\partial^i}{\partial s^i}\sum_{d\leq Q}\varphi_{h,k,{\ell}}(d,s)\sum_{q\leq Q/d}\frac{d_{{\ell}-1}(q)((qd)^{1/A}+h-\delta_A(q))^s}{q}\biggr\vert _{s=1}\nonumber\\
&=&\sum_{j=0}^i{i\choose j}\sum_{d\leq Q}   \frac{\partial^{i-j}}{\partial s^{i-j}}   \varphi_{h,k,{\ell}}(d,s)\frac{\partial^{j}}{\partial s^{j}}V_{A,h,{\ell},Q}(d,s)\biggr\vert _{s=1},
\end{eqnarray}
where 
\begin{equation}
V_{A,h,{\ell},Q}(d,s)=\sum_{q\leq Q/d}\frac{d_{{\ell}-1}(q)((qd)^{1/A}+h-\delta_A(q))^s}{q}\nonumber
\end{equation}
and
\begin{eqnarray}\label{part}
\frac{\partial^{j}}{\partial s^{j}}V_{A,h,{\ell},Q}(d,s)\biggr\vert _{s=1}&=&(-1)^j\sum_{q\leq Q/d}\frac{d_{{\ell}-1}(q)((qd)^{1/A}+h-\delta_A(q))\log^j((qd)^{1/A}+h-\delta_A(q))}{q}
\nonumber\\
&=&(-A)^{-j}d^{1/A}\sum_{q\leq Q/d}\frac{d_{{\ell}-1}(q)\log^j(qd)}{q^{1-1/A}}+O_{A,h,j,{\ell}}\left((Q/d)^{\epsilon}\right)\nonumber\\
&=&(-A)^{-j}d^{1/A}\sum_{m=0}^j{j\choose m}\log^{j-m}d\sum_{q\leq Q/d}\frac{d_{{\ell}-1}(q)\log^{m}q}{q^{1-1/A}}+O_{A,h,j,{\ell}}\left((Q/d)^{\epsilon}\right).\nonumber\\
\end{eqnarray}
The inner summation on the right hand side of (\ref{part}) may be written as 
\begin{eqnarray}
\sum_{q\leq Q/d}\frac{d_{{\ell}-1}(q)\log^{m}q}{q^{1-1/A}}=\frac{(-1)^m}{2\pi i}\int_{(\epsilon)}\frac{d^m}{dw^m}\zeta^{{\ell}-1}(w+1)\frac{(Q/d)^{w+1/A}dw}{w+1/A},\nonumber
\end{eqnarray}
which may be evaluated using Cauchy's Theorem and classical results on the error term in the generalised Dirichlet divisor problem (Titchmarsh \cite{Titch}, Section 12). The error term is $O_{A,h,{\ell}}\left((Q/d)^{1/A-2/{\ell}+\epsilon}\right)$, and the residue at the pole at $w=0$ is 
\begin{eqnarray}\label{po}
&=&\frac{(-1)^m}{(m+{\ell}-2)!}\sum_{v=0}^{m+{\ell}-2}{m+{\ell}-2 \choose v}\frac{\partial^{v}}{\partial w^{v}}w^{m+{\ell}-1}\frac{\partial^{m}}{\partial w^{m}}\zeta^{{\ell}-1}(w+1)\biggr\vert _{w=0}\frac{\partial^{m+{\ell}-2-v}}{\partial w^{m+{\ell}-2-v}}\frac{(Q/d)^{w+1/A}}{w+1/A}\biggr\vert _{w=0}
\nonumber\\&=&(-1)^{m-1}(Q/d)^{1/A}\sum_{v=0}^{{\ell}+m-2}\frac{a_v({\ell}-1)(v-{\ell}+1)_m}{v!}\sum_{r=0}^{{\ell}+m-2-v}\frac{(-A)^{{\ell}+m-1-v-r}\log^r(Q/d)}{r!}\nonumber\\
&=&(-1)^{m-1}(Q/d)^{1/A}\sum_{r=0}^{{\ell}+m-2}\frac{\log^r(Q/d)}{r!}\sum_{v=0}^{{\ell}+m-2-r}\frac{(-A)^{{\ell}+m-1-v-r}a_{v}({\ell}-1)(v-{\ell}+1)_m}{v!}.\nonumber
\end{eqnarray}
As such, (\ref{part}) is
\begin{eqnarray}\label{dddd}
&=&(-A)^{-j}Q^{1/A}\sum_{m=0}^j{j\choose m}(-1)^{m-1}\log^{j-m}d
\sum_{r=0}^{{\ell}+m-2}\frac{\log^r(Q/d)}{r!}\nonumber\\
&\times & \sum_{v=0}^{{\ell}+m-2-r}\frac{(-A)^{{\ell}+m-1-v-r}a_{v}({\ell}-1)(v-{\ell}+1)_m}{v!}\\
&+& O_{A,h,j,{\ell}}\left(Q^{1/A-2/{\ell}+\epsilon}d^{2/{\ell}-\epsilon}\right)\nonumber
\end{eqnarray}
and so, expanding $\log^j(Q/d)$ as a polynomial in $\log Q$ on the right hand side of (\ref{dddd}), it follows that (\ref{w}) is 
\begin{eqnarray}\label{pof}
&=&-Q^{1/A}\sum_{j=0}^{i}{i\choose j}(-1)^j \sum_{m=0}^j {j\choose m}\sum_{r=0}^{{\ell}+m-2}\sum_{u=0}^{r}\frac{\log^uQ}{u!(r-u)!}\sum_{v=0}^{{\ell}+m-2-r}
\nonumber\\
&\times&  \frac{(-A)^{{\ell}+m-1-j-v-r}a_{v}({\ell}-1)(v-{\ell}+1)_m}{v!}
 \frac{\partial^{i-j}}{\partial s^{i-j}} \sum_{d\leq Q}\varphi_{h,k,{\ell}}^{}(d,s)(-\log d)^{j-m+r-u}\biggr\vert _{s=1}
\nonumber\\&+&O\left(Q^{1/A+\epsilon-2/{\ell}}\right)\nonumber
\end{eqnarray}
\begin{eqnarray}
&=&-Q^{1/A}\sum_{u=2-{\ell}}^{i}\frac{\log^{{\ell}+u-2} Q}{({\ell}+u-2)!}\sum_{j=u}^{i}{i\choose j}(-1)^j\sum_{m=u}^{j}{j\choose m}\sum_{r=0}^{m-u}\sum_{v=0}^{r}\nonumber\\
&\times&\frac{(-A)^{r-j-v+1}a_v({\ell}-1)(v-{\ell}+1)_m}{(m-r-u)!v!}
 \frac{\partial^{i-j}}{\partial s^{i-j}} \frac{\partial^{j+{\ell}-r-u-2}}{\partial w^{j+{\ell}-r-u-2}} \sum_{d\leq Q}\frac{\varphi_{h,k,{\ell}}^{}(d,s)}{d^w}\biggr\vert _{s=1,v=0}
 \nonumber\\&+&O\left(Q^{1/A+\epsilon-2/{\ell}}\right)\nonumber\\ 
&=&-Q^{1/A}\sum_{u=2-{\ell}}^{i}\frac{\log^{{\ell}+u-2} Q}{({\ell}+u-2)!}\sum_{j=u}^{i}{i\choose j}(-1)^j\sum_{m=u}^{j}{j\choose m}\sum_{r=u}^{m}\sum_{v=0}^{r-u}\nonumber\\
&\times&\frac{(-A)^{r-j-u-v+1}a_v({\ell}-1)(v-{\ell}+1)_m}{(m-r)!v!}
 \frac{\partial^{i-j}}{\partial s^{i-j}} \frac{\partial^{j+{\ell}-r-2}}{\partial w^{j+{\ell}-r-2}} \sum_{d\leq Q}\frac{\varphi_{h,k,{\ell}}^{}(d,s)}{d^w}\biggr\vert _{s=1,v=0}
\nonumber\\&+&O\left(Q^{1/A+\epsilon-2/{\ell}}\right)\nonumber
\end{eqnarray}

\begin{eqnarray}
&=&-Q^{1/A}\sum_{u=2-{\ell}}^{i}\frac{\log^{{\ell}+u-2} Q}{({\ell}+u-2)!}\sum_{j=u}^{i}{i\choose j}(-1)^j\sum_{r=u}^{j}\sum_{v=0}^{r-u}\frac{(-A)^{r-j-u-v+1}a_v({\ell}-1)}{v!}\nonumber\\
&\times&\left(\sum_{m=r}^{j}{j\choose m}\frac{(v-{\ell}+1)_m}{(m-r)!}\right)
 \frac{\partial^{i-j}}{\partial s^{i-j}} \frac{\partial^{j+{\ell}-r-2}}{\partial w^{j+{\ell}-r-2}} \sum_{d\leq Q}\frac{\varphi_{h,k,{\ell}}^{}(d,s)}{d^w}\biggr\vert _{s=1,v=0}
+O\left(Q^{1/A+\epsilon-2/l}\right)\nonumber\\ 
&=&-Q^{1/A}\sum_{u=2-{\ell}}^{i}\frac{\log^{{\ell}+u-2} Q}{({\ell}+u-2)!}\sum_{j=u}^{i}{i\choose j}\sum_{r=u}^{j}\sum_{v=0}^{r-u}\frac{(-A)^{r-j-u-v+1}a_v({\ell}-1)(v-{\ell}+1)_r}{v!}\nonumber\\
&\times&{{\ell}-v-2\choose j-r}
 \frac{\partial^{i-j}}{\partial s^{i-j}} \frac{\partial^{j+{\ell}-r-2}}{\partial w^{j+{\ell}-r-2}} \sum_{d\leq Q}\frac{\varphi_{h,k,{\ell}}^{}(d,s)}{d^w}\biggr\vert _{s=1,v=0}
+O\left(Q^{1/A+\epsilon-2/{\ell}}\right)\nonumber 
\end{eqnarray}
so (\ref{mainres4}) is

\begin{eqnarray}\label{polycalc2}
&=&-Q^{1/A}\sum_{u=2-{\ell}}^{k-1}\frac{\log^{{\ell}+u-2} Q}{({\ell}+u-2)!}   \sum_{j=u}^{k-1}{i\choose j}\sum_{r=u}^{j}\sum_{v=0}^{r-u}\frac{(-A)^{r-j-u-v+1}a_v({\ell}-1)(v-{\ell}+1)_r}{v!}\nonumber\\
&\times&{{\ell}-v-2\choose j-r}
 \sum_{i=j}^{k-1}\frac{c_{k-1-i}(k)}{i!} \frac{\partial^{i-j}}{\partial s^{i-j}} \frac{\partial^{j+{\ell}-r-2}}{\partial w^{j+{\ell}-r-2}} \sum_{d\leq Q}\frac{\varphi_{h,k,{\ell}}^{}(d,s)}{d^w}\biggr\vert _{s=1,v=0}
+O\left(Q^{1/A+\epsilon-2/{\ell}}\right)\nonumber \\
&=&-Q^{1/A}\sum_{u=0}^{k+{\ell}-3}\frac{\log^{u} Q}{u!}   \sum_{j=u-{\ell}+2}^{k-1}{i\choose j}\sum_{r=u-{\ell}+2}^{j}\sum_{v=0}^{r-u+{\ell}-2}\frac{(-A)^{r-j-u-v+{\ell}-1}a_v({\ell}-1)(v-{\ell}+1)_r}{v!}\nonumber\\
&\times&{{\ell}-v-2\choose j-r}
 \sum_{i=j}^{k-1}\frac{c_{k-1-i}(k)}{i!} \frac{\partial^{i-j}}{\partial s^{i-j}} \frac{\partial^{j+{\ell}-r-2}}{\partial w^{j+{\ell}-r-2}} \sum_{d\leq Q}\frac{\varphi_{h,k,{\ell}}^{}(d,s)}{d^w}\biggr\vert _{s=1,v=0}
+O\left(Q^{1/A+\epsilon-2/{\ell}}\right).\nonumber\\
\end{eqnarray}
Taking $Q=x^A$ in (\ref{polycalc2}) yields 

\begin{eqnarray}\label{ftm}
\frac{1}{(k-1)!}\frac{\partial^{k-1}}{\partial s^{k-1}}\frac{(s-1)^k\zeta^k(s)W_{h,k,{\ell}}\left(s,x^A\right)}{s}\biggr\vert _{s=1}=-x\sum_{u=0}^{k+{\ell}-3}\frac{a_{A,h,k,{\ell},u}\log^{u} x}{u!}+O\left(x^{1+\epsilon-2A/l}\right).\nonumber \\
\end{eqnarray}
From (\ref{p2}), (\ref{ftl}) and (\ref{ftm}), for $A<\theta_k$ we have
\begin{eqnarray}\label{polydef}
\sum_{n\leq x}d_k(n+h)d_{\ell}(n,A)&=&x\sum_{m=0}^{k-1}\sum_{n=0}^{{\ell}-1}\frac {A^nb_{h,k,{\ell},m,n}}{m!n!}\log^{m+n}x+x\sum_{m=0}^{k+{\ell}-3}\frac{a_{A,h,k,{\ell},m}\log^mx}{m!}\nonumber\\&+&O_{A,h,k,{\ell}}\left(x^{1+\epsilon-2A/{\ell}}\right)+O_{A,\delta,k}\left(x^{1-\delta}\right),
\end{eqnarray}
where the coefficients $b_{h,k,{\ell},m,n}$ and $a_{A,h,k,{\ell},u}$ are defined in  Section \ref{coeffs}.

\section{Proof of Theorem \ref{anotherpoly}}\label{pots}

This is a consequence of Theorem \ref{arith} and the method of proof of Theorem \ref{mainlem}. We have
\begin{eqnarray}\label{pota}
\sum_{n\leq x}d_k(n+h,A)d_{\ell}(n,B)&=&\sum_{q\leq x^B}d_{{\ell}-1}(q)\sum_{\substack{n\equiv h\pmod q\\q^{1/B}\leq n\leq x+h}}d_k(n,A) 
\nonumber\\
&=&A^{k-1}\sum_{q\leq x^B}d_{{\ell}-1}(q)\sum_{\substack{n\equiv h\pmod q\\q^{1/B}\leq n\leq x+h}}d_k(n)\nonumber\\ &+&O_{A,B,h,k}\left(x\log^{k-2}x\sum_{q\leq x^B}\frac{d_{{\ell}-1}(q)}{q}\right)   
\end{eqnarray}
provided that $B< \min(\theta_k,A\theta_{k-1})$, by Theorem \ref{arith}. The first term on the right hand side of (\ref{pota}) is identical to (\ref{p1}), and the summation in the error term is $O(\log^{{\ell}-1}x)$.

\section{Proof of Theorem \ref{t3}}\label{pot3}
We begin by establishing the analytic continuation of $\mathcal{D}_{h,k,{\ell}}(s,Q)$. We have 
\begin{eqnarray}
\mathcal{D}_{h,k,{\ell}}(s,Q)&=&\sum_{1}^{\infty}\frac{d_k(n+h)d_{\ell}\left(n,\frac{Q}{\log n}\right)}{(n+h)^s}\nonumber\\
&=&\sum_{1}^{\infty}\frac{d_k(n+h)}{(n+h)^s}\sum_{\substack{q\leq e^Q\\q|n}}d_{{\ell}-1}(q)\nonumber\\
&=& \sum_{q\leq e^Q}d_{{\ell}-1}(q)\sum_{\substack{n\equiv h\pmod q\\n>h}}\frac{d_k(n)}{n^s}\nonumber\\
&=&\sum_{q\leq e^Q}d_{{\ell}-1}(q)\sum_{n\equiv h\pmod q}\frac{d_k(n)}{n^s}-\sum_{q\leq e^Q}d_{{\ell}-1}(q)\sum_{\substack{n\equiv h\pmod q\\n\leq h}}\frac{d_k(n)}{n^s}.\nonumber
\end{eqnarray}
Since
\begin{eqnarray}
\sum_{n\equiv h\pmod q}\frac{d_k(n)}{n^s}=\frac{1}{\phi \left(\frac{q}{g}\right)}\sum_{\chi \left(\textrm{mod } \frac{q}{g}\right)}\overline{\chi}\left(\frac{h}{g}\right)\sum_{n=1}^{\infty}\frac{\chi(n)d_k(gn)}{(gn)^s}\nonumber
\end{eqnarray}
where 
\begin{eqnarray}\label{g}
\sum_{1}^{\infty}\frac{\chi(n)d_k(gn)}{(gn)^s}&=&\prod_p\sum_{\beta=0}^{\infty}d_k(p^{\beta+\delta})\chi(p^{\beta})p^{-(\beta+\delta)s}
\nonumber\\&=&L^k(s,\chi)b_k(s,\chi,g)
\end{eqnarray}
 is a meromorphic function of $s$ for all $h,k,{\ell}$, the above shows that $\mathcal{D}_{h,k,{\ell}}(s,Q)$ is a meromorphic function of $s$  for all $h,k,{\ell},Q$. We now observe that
\begin{eqnarray}\label{9}
\mathcal{D}_{h,k,{\ell}}(s,Q)=\zeta^k(s)Z_{h,k,{\ell}}\left(s,e^Q\right) +B_{h,k,{\ell}}(s,Q)
\end{eqnarray}
say, where 
$Z_{h,k,{\ell}}(s,Q)$ is defined in (\ref{sum}) and $B_{h,k,{\ell}}(s,Q)$ is an analytic function of $s$ for all fixed $h,k,{\ell},Q$. We also set
\begin{eqnarray}
D_{h,k,{\ell}}(x,Q)=\sum_{n\leq x}d_k(n+h)d_{\ell}\left(n,\frac{Q}{\log n}\right)\nonumber
\end{eqnarray}
and note that 
\begin{eqnarray}\label{mat}
\mathcal{D}_{h,k,{\ell}}(s,Q)=s\int_1^{\infty}D_{h,k,{\ell}}(x,Q)\frac{dx}{(x+h)^{s+1}}.\nonumber
\end{eqnarray}
By (\ref{9}), we have 
\begin{eqnarray}\label{pole}
\mathcal{D}_{h,k,l}(s,Q)=\frac{Z_{h,k,{\ell}}\left(s,e^Q\right)}{(s-1)^k}+C_{h,k,{\ell}}(s,Q)
\end{eqnarray}
for $\sigma>1$, where $C_{h,k,{\ell}}(s,Q)=O_{h,k,{\ell},Q}((s-1)^{1-k})$ as $s\rightarrow 1$. 
By  (\ref{9}) we know that $\mathcal{D}_{h,k,{\ell}}(1+it,Q)$ is continuous for $t\in\mathbb{R}$, $t\neq 0$. The Delange-Ikehara Tauberian theorem \cite{Del} then states that the behaviour of $D_{h,k,l}(x,Q)$ as $x\rightarrow\infty$ is determined by (\ref{pole}), specifically
\begin{eqnarray}\label{asy2}
\lim_{x\rightarrow\infty}\frac{D_{h,k,{\ell}}(x,Q)}{x\log^{k-1}x}=Z_{h,k,{\ell}}\left(1,e^Q\right).\nonumber
\end{eqnarray}\\

Arguing in the same way as in the proof of Theorem \ref{mainlem}, we now note that
\begin{eqnarray}\label{concho}
\frac{Z_{h,k,{\ell}}\left(1,e^Q\right)}{Q^{{\ell}-1}}=\frac{C_{k,{\ell}}f_{k,{\ell}}(h)}{(k-1)!({\ell}-1)!}+O_{h,k,{\ell}}\left(\frac{1}{Q}\right)
\end{eqnarray}
and suppose that $\lim_{y\rightarrow\infty}\mathcal{D}_{h,k,{\ell}}(1+it,y)$
is continuous when $t\neq 0$. Noting that $D_{h,k,{\ell}}(x,\log x)=D_{h,k,{\ell}}(x)$ and taking $Q=\log x$, by the Delange-Ikehara theorem
and (\ref{concho}) we have
\begin{eqnarray}
\frac{D_{h,k,{\ell}}(x)}{x\log^{k+\ell-2}x}\sim\frac{Z_{h,k,{\ell}}(1,x)}{\log^{{\ell}-1} x}=\frac{C_{k,{\ell}}f_{k,{\ell}}(h)}{(k-1)!({\ell}-1)!}+O_{h,k,{\ell}}\left(\frac{1}{\log x}\right)\nonumber
\end{eqnarray}
as $x\rightarrow\infty$.

\section{The coefficients in the case $k={\ell}=2$}\label{append}
\noindent We conclude by computing Estermann's \cite{Est} asymptotic expansion for $D_{h,2,2}(x)$ using  Theorem \ref{mainlem}. Thus
 \begin{eqnarray}\label{polynom2}
\sum_{n\leq x}d_2(n+h)d_2(n,A)&=&x\sum_{m=0}^{1}\sum_{n=0}^{1}\frac {A^nb_{h,2,2,m,n}}{m!n!}\log^{m+n}x+x\sum_{m=0}^{1}\frac{a_{A,h,2,2,m}\log^mx}{m!}+O_{A,h}\left(x^{1-\delta}\right)\nonumber\\
&=&Ab_{h,2,2,1,1}x\log^2x+\left(b_{h,2,2,1,0}+Ab_{h,2,2,0,1}+a_{A,h,2,2,1}\right)x\log x\nonumber\\
&+&\left(b_{h,2,2,0,0}+a_{A,h,2,2,0}\right)x+O_{A,h}\left(x^{1-\delta}\right).
\end{eqnarray}
Thus, putting $A=1/2$ and using the symmetry of the divisors of $n$ about $n^{1/2}$ in  (\ref{polynom2}), we obtain 
 \begin{eqnarray}\label{polynom3}
D_{h,2,2}(x)&=&b_{h,2,2,1,1}x\log^2x+\left(2b_{h,2,2,1,0}+b_{h,2,2,0,1}+2a_{1/2,h,2,2,1}\right)x\log x\nonumber\\
&+&2\left(b_{h,2,2,0,0}+a_{1/2,h,2,2,0}\right)x+O_{h}\left(x^{1-\delta}\right).
\end{eqnarray}
We now use Definitions \ref{form} and \ref{forma} to calculate the coefficients in (\ref{polynom3}). We adopt Estermann's notation 
\begin{eqnarray}
\sigma_{-1}'(h)=\sum_{d|h}\frac{\log d}{d}, \hspace{0.8cm} \sigma_{-1}''(h)=\sum_{d|h}\frac{\log^2 d}{d}
\end{eqnarray}
and
\begin{eqnarray}
a'=-\sum_{2}^{\infty}\frac{\mu(n)\log n}{n^2}, \hspace{0.8cm} a''=\sum_{2}^{\infty}\frac{\mu(n)\log^2 n}{n^2}.
\end{eqnarray}
Firstly, for the coefficient of $x\log^2 x$ we have 
\begin{eqnarray}\label{e2}
b_{h,2,2,1,1}=C_{2,2}(1,0)f_{h,2,2}(1,0)=\frac{6}{\pi^2}\sigma_{-1}(h).\nonumber
\end{eqnarray}
Secondly, for the coefficient of $x\log x$, we have 
 \begin{eqnarray}\label{e1}
&&2b_{h,2,2,1,0}+b_{h,2,2,0,1}+2a_{1/2,h,2,2,1}\nonumber\\
&=&2a_1(1)c_0(2)C_{2,2}(1,0)f_{h,2,2}(1,0)+2a_0(1)c_0(2) \frac{\partial}{\partial w}C_{2,2}(1,w)f_{h,2,2}(1,w)\biggr\vert _{w=0}\nonumber\\
&+&a_0(1)c_1(2)C_{2,2}(1,0)f_{h,2,2}(1,0)+a_0(1)c_0(2)\frac{\partial}{\partial s}C_{2,2}(s,0)f_{h,2,2}(s,0)\biggr\vert _{s=0}\nonumber\\
&-&a_0(1)c_0(2)C_{2,2}(1,0)f_{h,2,2}(1,0)\nonumber
\end{eqnarray}
\begin{eqnarray}
&=&2\gamma C_{2,2}(1,0)f_{h,2,2}(1,0)+2 \frac{\partial}{\partial w}C_{2,2}(1,w)f_{h,2,2}(1,w)\biggr\vert _{w=0}\nonumber\\
&+&(2\gamma-1)C_{2,2}(1,0)f_{h,2,2}(1,0)+\frac{\partial}{\partial s}C_{2,2}(s,0)f_{h,2,2}(s,0)\biggr\vert _{s=0}\nonumber\\
&-&C_{2,2}(1,0)f_{h,2,2}(1,0)\nonumber\\
&=&\frac{12}{\pi^2}(2\gamma-1)\sigma_{-1}(h)+2 \frac{\partial}{\partial w}C_{2,2}(1,w)f_{h,2,2}(1,w)\biggr\vert _{w=0}\nonumber\\
&+&\frac{\partial}{\partial s}C_{2,2}(s,0)f_{h,2,2}(s,0)\biggr\vert _{s=0}\nonumber\\
&=&\left(\frac{12}{\pi^2}(2\gamma-1)+2\frac{\partial}{\partial w}C_{2,2}(1,w)\biggr\vert _{w=0}+\frac{\partial}{\partial s}C_{2,2}(s,0)\biggr\vert _{s=0}\right)\sigma_{-1}(h)\nonumber\\
&+&\frac{6}{\pi^2}\left(2\frac{\partial}{\partial w}f_{h,2,2}(1,w)\biggr\vert _{w=0}+\frac{\partial}{\partial s}f_{h,2,2}(s,0)\biggr\vert _{s=0}\right)\nonumber\\
&=&\left(\frac{12}{\pi^2}(2\gamma-1)+4a'\right)\sigma_{-1}(h)-\frac{24}{\pi^2}\sigma_{-1}'(h).
\nonumber
\end{eqnarray}
Lastly, for the coefficient of $x$ we have 
\begin{eqnarray}\label{e0}
&&2b_{h,2,2,0,0}+2a_{1/2,h,2,2,0}\nonumber\\
&=&2a_1(1)c_1(2)C_{2,2}(1,0)f_{h,2,2}(1,0)+2a_1(1)C_{2,2}(1,0)\frac{\partial}{\partial s}f_{h,2,2}(s,0)\biggr\vert _{s=1}
\nonumber\\
&+&2a_1(1)f_{h,2,2}(1,0)\frac{\partial}{\partial s}C_{2,2}(s,0)\biggr\vert _{s=1}+ 2a_0(1)c_1(2)C_{2,2}(1,0)\frac{\partial}{\partial w}f_{h,2,2}(1,w)\biggr\vert _{w=0}\nonumber\\&+&2c_1(2)f_{h,2,2}(1,0)\frac{\partial}{\partial w}C_{2,2}(1,w)\biggr\vert _{w=0}+2C_{2,2}(1,0)\frac{\partial}{\partial s}\frac{\partial}{\partial w}f_{h,2,2}(s,w)\biggr\vert _{w=0,s=1}\nonumber\\
&+&2\frac{\partial}{\partial w}C_{2,2}(s,w)\biggr\vert _{w=0}\frac{\partial}{\partial s}f_{h,2,2}(s,w)\biggr\vert _{s=1}+2\frac{\partial}{\partial s}C_{2,2}(s,w)\biggr\vert _{s=1}\frac{\partial}{\partial w}f_{h,2,2}(s,w)\biggr\vert _{w=0}\nonumber
\end{eqnarray}
\begin{eqnarray}
&+&2f_{h,2,2}(1,0)\frac{\partial}{\partial w}\frac{\partial}{\partial s}C_{2,2}(s,w)\biggr\vert _{w=0,s=1}
\nonumber\\&-&c_1(2)C_{2,2}(1,0)f_{h,2,2}(1,0)-f_{h,2,2}(1,0)\frac{\partial}{\partial s} C_{2,2}(s,0)   \biggr\vert _{s=1}-C_{2,2}(1,0)\frac{\partial}{\partial s} f_{h,2,2}(s,0)   \biggr\vert _{s=1}\nonumber\\
&+&C_{2,2}(1,0)f_{h,2,2}(1,0)
\nonumber
\end{eqnarray}

\begin{eqnarray}
&=&\frac{12\gamma}{\pi^2}(2\gamma-1)\sigma_{-1}(h)-\frac{24\gamma}{\pi^2}\sigma_{-1}'(h)+4\gamma a'\sigma_{-1}(h)-\frac{12}{\pi^2}(2\gamma-1)\sigma_{-1}'(h)
+2(2\gamma-1)a'\sigma_{-1}(h)
\nonumber\\
&+&\frac{24}{\pi^2}\sigma_{-1}''(h)-8a'\sigma_{-1}'(h)+4a''\sigma_{-1}(h)-\frac{6}{\pi^2}(2\gamma-1)\sigma_{-1}(h)+\frac{12}{\pi^2}\sigma_{-1}'(h)-2a'\sigma_{-1}(h)+\frac{6}{\pi^2}\sigma_{-1}(h)
\nonumber\\
&=&\left(\frac{6}{\pi^2}(2\gamma-1)^2+\frac{6}{\pi^2}+4a'(2\gamma-1)+4a''   \right)\sigma_{-1}(h)-\left(\frac{24}{\pi^2}(2\gamma-1)+8a'\right)\sigma_{-1}'(h)+\frac{24}{\pi^2}\sigma_{-1}''(h).    \nonumber
\end{eqnarray}

\vspace{1cm}

\noindent \textbf{Acknowledgment:} We would like to thank the anonymous referee for their comments and suggestions regarding the paper. We would also like to thank Prof. Steve Gonek and Prof. Zeev Rudnick for their suggestions on a previous version of this paper, and Prof. Andrew Granville, Prof. Roger Heath-Brown, Prof. Chris Hughes, and Prof. Terence Tao for the many comments and ideas provided that helped improve the paper. The first author is also grateful to the Leverhulme Trust (RPG-2017-320) for the support through the research project grant ``Moments of L-functions in Function Fields and Random Matrix Theory". The second author is grateful for a PhD studentship supported by the Faculty of Environment, Science and Economy at the University of Exeter.

\vspace{1cm}

\end{document}